\def\N{{\mathbb{N}}}
\def\R{{\mathbb{R}}}
\def\U{{\mathcal{U}}}
\newcommand{\Vol}{\operatorname{Vol}}
\newcommand{\Ker}{\operatorname{Ker}}
\newcommand{\I}{\operatorname{Im}}
\newtheorem{lemma}{Lemma}[section]
\newtheorem{theorem}[lemma]{Theorem}
\newtheorem{corollary}[lemma]{Corollary}
\theoremstyle{definition}
\theoremstyle{remark}
\newtheorem{remark}[lemma]{Remark}
\begin{document}

\title{De Rham's theorem for Orlicz cohomology}
\author[E. Sequeira]{Emiliano Sequeira}
\address{Sobolev Institute of Mathematics, 4 Acad. Koptyug Ave., Novosibirsk 630090, Russia}
\email{esequeiramanzino@gmail.com}

\keywords{Orlicz space, Orlicz cohomology Quasi-isometry invariant.}
\subjclass[2010]{46E30}

\begin{abstract}
We prove that the de Rham $L^\phi$-cohomology of a Riemannian manifold $M$ admiting a convenient triangulation $X$ is isomorphic to the simplicial $\ell^\phi$-cohomology of $X$ for any Young function $\phi$. This result implies the quasi-isometry invariance of the first one.
\end{abstract}

\maketitle

\section{Introduction}

Orlicz cohomology has been studied recently (see \cite{C,GK19,K17,K21,KP2,KP}) as a natural generalization of $L^p$-cohomology. These cohomology theories provide quasi-isometry invariants (and therefore have applications to classification problems, see for example \cite{C,Pa08,S}), and have connections with Sobolev and Poincaré type inequalities (\cite{GK19,GT16}) and harmonic functions (\cite{KP2,P}). 

There exist several definitions of  $L^p$-cohomology depending on the  context (simplicial complexes, Riemannian manifolds,  topological groups, general metric spaces), which are related by equivalence theorems (see \cite{BR,G,GKS88,Pa95,S}). The Orlicz case is not so explored yet in this aspect. We can mention, as results obtained in this direction, the equivalence between 
Orlicz cohomology of a star-bounded simplicial complex and the cohomology of the corresponding Orlicz-Sullivan complex (\cite{K21}), and the equivalence between the simplicial and de Rham Orlicz cohomology for Riemannian manifolds in the case of degree 1 (\cite{C}). In a previous work (\cite{S}) a generalization of this second result in all degrees was given in the case of Lie groups. Here we present an improvement of that. 

In the following theorem $L^\phi H^k(M)$ denotes the $k$-space of $L^\phi$-cohomology of the Riemannian manifold $M$ and $\ell^\phi H^k(X)$ the $k$-space of $\ell^\phi$-cohomology of the simplicial complex $X$, where $\phi$ is a Young function. The reduced versions of de Rham and simplicial Orlicz cohomology are denoted by $L^\phi \overline{H}^k(M)$ and $\ell^\phi \overline{H}^k(X)$. See $\S$\ref{OC} for the precise definitions. 

\begin{theorem}\label{main}
Let $M$ be a Riemannian manifold of dimension $n$ that admits a continuous triangulation $X$ such that:
\begin{enumerate}
    \item[(a)] there exists a uniform bound  for the degree of every vertex; and
    \item[(b)] every simplex of $X$ is biLipschitz homeomorphic to the standard one of the same dimension, where the Lipschitz constant does not depend on the simplex. 
\end{enumerate}
Then for any Young function $\phi$ and $k=0,\ldots, n$, the topological vector spaces $L^\phi H^k(M)$ and $\ell^\phi H^k(X)$ are isomorphic, and so are the Banach spaces $L^\phi \overline{H}^k(M)$ and $\ell^\phi \overline{H}^k(X)$. 
\end{theorem}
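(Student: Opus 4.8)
The plan is to construct explicit bounded cochain maps in both directions between the de Rham $L^\phi$-complex of $M$ and the simplicial $\ell^\phi$-cochain complex of $X$, and to show that they are mutually inverse up to a bounded cochain homotopy. Since a bounded homotopy equivalence between complexes of Banach spaces induces topological isomorphisms on the (unreduced) cohomology and isomorphisms of Banach spaces on the reduced cohomology, this yields both assertions of the theorem at once, for every degree $k$ and every Young function $\phi$.

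First I would introduce the de Rham map $R$ and the Whitney map $W$. For an $L^1_{loc}$ $k$-form $\omega$ one sets $R\omega(\sigma)=\int_\sigma\omega$ for each $k$-simplex $\sigma$; Stokes' theorem makes $R$ a cochain map to the simplicial cochains. Conversely $W$ sends a simplicial $k$-cochain $c$ to the Whitney form $Wc=\sum_\sigma c(\sigma)\,\omega_\sigma$, where $\omega_\sigma$ is the elementary Whitney form built from the barycentric coordinates of $\sigma$; one has $d(Wc)=W(Dc)$, where $D$ is the simplicial coboundary, and the algebraic identity $\int_\tau\omega_\sigma=\delta_{\sigma\tau}$, so that $RW=\mathrm{id}$ on cochains. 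The real content is the analytic estimates. Using hypothesis (b), each simplex, together with its transported Whitney forms, is uniformly comparable to the standard model, so $\abs{R\omega(\sigma)}\lesssim\frac{1}{\abs{U_\sigma}}\int_{U_\sigma}\abs{\omega}$ and $\norm{\omega_\sigma}_\infty\lesssim 1$ with $\supp\omega_\sigma\subset\mathrm{St}(\sigma)$; using hypothesis (a) the open stars $\{\mathrm{St}(v)\}_{v\in X^{(0)}}$ form a uniformly locally finite cover, so each point of $M$ lies in boundedly many of them. Combining these with a Jensen-type convexity inequality for the Young function, namely $\phi\!\big(\tfrac{1}{\abs{A}}\int_A\abs{\omega}\big)\le\tfrac{1}{\abs{A}}\int_A\phi(\abs{\omega})$ together with bounded overlap, one checks that $R$ restricts to a bounded operator from the de Rham $L^\phi$-complex to the $\ell^\phi$-cochain complex and $W$ to a bounded operator in the reverse direction which is also bounded on coboundaries. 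I would isolate this ``bounded geometry implies bounded Orlicz operators'' principle as a lemma, since it is invoked repeatedly and is the step where (a) and (b) are really used.

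The core step is producing a bounded cochain homotopy $H$ with $WR-\mathrm{id}=dH+Hd$ on the de Rham $L^\phi$-complex. Here I would, if convenient, first smooth $\omega$ by a de Rham regularization operator $A$ that preserves the cohomology class, moves forms only a uniformly bounded distance, and is bounded on each space of $L^\phi$-forms by a convolution/Young-inequality estimate adapted to the bounded geometry; then on smooth forms the homotopy can be assembled from the local Poincaré homotopies on each star (coning to the barycentre of the corresponding simplex), patched together by the partition of unity given by the barycentric coordinate functions of the vertices. Boundedness of $H$ on the Orlicz complexes reduces once more to the lemma above, each local homotopy having uniformly controlled support and operator norm. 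An alternative route is to run the whole comparison through the Čech--Orlicz--de Rham double complex associated with the cover of $M$ by open stars of vertices: by (a) this cover is uniformly locally finite and by (b) all its nonempty finite intersections are uniformly biLipschitz to Euclidean balls, so both edge homomorphisms of the double complex are bounded cochain homotopy equivalences onto $L^\phi H^*(M)$ and $\ell^\phi H^*(X)$ respectively, and composition gives the desired isomorphism.

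I expect the main obstacle to be precisely the analytic control of the homotopy (equivalently, the regularization) operator in the Luxemburg norm: for a general Young function one cannot invoke interpolation or the homogeneity that makes the $L^p$ arguments easy, so every estimate must be pushed through by convexity, and one must verify that all constants depend only on the geometric data in (a)--(b) and not on the simplex, the point, or $\phi$. Once the bounded homotopy equivalence is in place the passage to cohomology is formal: $RW=\mathrm{id}$ and $WR\simeq\mathrm{id}$ show that $R_*$ and $W_*$ are mutually inverse topological isomorphisms $L^\phi H^k(M)\cong\ell^\phi H^k(X)$, and since the maps and the homotopy are bounded they descend to the closures of the coboundaries, yielding the Banach-space isomorphism $L^\phi\overline{H}^k(M)\cong\ell^\phi\overline{H}^k(X)$ for all $k$ and all $\phi$.
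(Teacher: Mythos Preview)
Your \emph{alternative} route—the \v{C}ech--Orlicz--de Rham double complex built on the open stars of vertices—is precisely the argument the paper gives: the rows retract by the Orlicz Poincar\'e lemma (applied on each open star, transported via the uniform biLipschitz charts from (b)), the columns retract via a partition of unity subordinate to the star cover (whose overlap is controlled by (a)), and the two kernel complexes are identified with $L^\phi C^*(M)$ and $\ell^\phi(X_*)$ respectively. So on that branch you are fully aligned with the paper.

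Your primary approach, however, has a genuine gap at the integration map $R$. For $k<n$ an $L^1_{loc}$ (or $L^\phi$) $k$-form $\omega$ has no well-defined trace on a $k$-simplex $\sigma$, so $R\omega(\sigma)=\int_\sigma\omega$ is not even defined on the complex $L^\phi C^*(M)$ you need to map out of. Worse, even for smooth $\omega$ the claimed estimate $\abs{R\omega(\sigma)}\lesssim\frac{1}{\abs{U_\sigma}}\int_{U_\sigma}\abs{\omega}$ is false: the right-hand side is an $n$-dimensional average and cannot control a $k$-dimensional integral (take a form concentrated in a thin neighbourhood of $\sigma$). To salvage this route one must replace $R$ by an \emph{averaged} integration—integrate over a transverse family of translated copies of $\sigma$ and average—which simultaneously makes $R$ well defined on $L^\phi$ forms and produces a Jensen-applicable inequality; but then $RW=\mathrm{id}$ and the commutation with $d$ and the simplicial coboundary all require fresh verification. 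Similarly, Whitney forms are only Lipschitz across faces, so $Wc$ lands in $L^\phi C^*(M)$ rather than in the smooth complex, and the homotopy $H$ must be built accordingly. None of this is fatal, but the analytic core of the Whitney/de Rham approach is substantially heavier than you indicate, which is exactly why the paper (and your own alternative) bypasses it via the bicomplex, where no integration over lower-dimensional sets is ever needed.
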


The existence of a triangulation as in Theorem \ref{main} can be ensured by imposing certain geometric hypothesis on the manifold $M$, such as bounded curvature and positive injectivity radius (see \cite{A}). 

In \cite{C} it is stablished that, if there exists a quasi-isometry between two simplicial complexes with certains properties, then their (reduced) $\ell^\phi$- cohomology are isomorphic. Using this result we can directly deduce the following: 

\begin{corollary}
Let $M_1$ and $M_2$ be two uniformly contractible Riemannian manifolds admiting  triangulations as in Theorem \ref{main}. If they are quasi-isometric, then for every Young function $\phi$ and $k=0,\ldots,n$ the topological vector spaces $L^\phi H^k(M_1)$ and $L^\phi H^k(M_2)$ are isomorphic, and so are the Banach spaces $L^\phi \overline{H}^k(M_1)$ and $L^\phi \overline{H}^k(M_2)$. 
\end{corollary}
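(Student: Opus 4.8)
The plan is to chain the isomorphisms provided by Theorem~\ref{main} with the quasi-isometry invariance of simplicial Orlicz cohomology from \cite{C}. Write $X_1$ and $X_2$ for the triangulations of $M_1$ and $M_2$ given by the hypothesis, each equipped with its natural length metric. Theorem~\ref{main} already gives topological isomorphisms $L^\phi H^k(M_i)\cong \ell^\phi H^k(X_i)$ and Banach isomorphisms $L^\phi \overline{H}^k(M_i)\cong \ell^\phi \overline{H}^k(X_i)$ for $i=1,2$ and $k=0,\ldots,n$, so it suffices to produce isomorphisms $\ell^\phi H^k(X_1)\cong \ell^\phi H^k(X_2)$ and $\ell^\phi \overline{H}^k(X_1)\cong \ell^\phi \overline{H}^k(X_2)$ and then compose.

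First I would observe that each $X_i$ is quasi-isometric to $M_i$. This is exactly where conditions (a) and (b) of Theorem~\ref{main} enter: the uniform biLipschitz control on the simplices together with the uniform bound on vertex degrees make the triangulation metrically and combinatorially uniform, so the map assigning to a point of $M_i$ a nearest vertex of $X_i$ (with quasi-inverse the inclusion of the vertex set into $M_i$) is a quasi-isometry — this is the standard comparison between a manifold of bounded geometry and a good triangulation of it. Composing the chain $X_1\to M_1\to M_2\to X_2$, in which the middle arrow is the given quasi-isometry and the outer arrows are the quasi-isometries just described, shows that $X_1$ and $X_2$ are quasi-isometric.

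Next I would check that $X_1$ and $X_2$ meet the standing hypotheses under which \cite{C} establishes quasi-isometry invariance of the (reduced) $\ell^\phi$-cohomology. These are: bounded geometry, which is condition (a); finite dimension, since $\dim X_i = n$; and uniform contractibility, which transfers from $M_i$ to $X_i$ because $M_i$ is uniformly contractible and, by (b), the quasi-isometry $X_i\to M_i$ is biLipschitz on each cell, so contractions in $M_i$ can be pulled back to simplicial ones with uniform control of the radii. Granting this, \cite{C} yields isomorphisms $\ell^\phi H^k(X_1)\cong \ell^\phi H^k(X_2)$ of topological vector spaces and $\ell^\phi \overline{H}^k(X_1)\cong \ell^\phi \overline{H}^k(X_2)$ of Banach spaces, and composing with the isomorphisms of Theorem~\ref{main} finishes the argument.

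The main obstacle I anticipate is bookkeeping rather than conceptual: one must match the precise list of assumptions in \cite{C} (bounded geometry, uniform contractibility, and whatever local contractibility radius is required there) against what conditions (a) and (b) plus uniform contractibility of $M_i$ actually deliver for $X_i$, and in particular confirm that uniform contractibility passes from $M_i$ to $X_i$ with uniform constants — a point that rests on the uniform biLipschitz equivalence of the simplices with the standard one.
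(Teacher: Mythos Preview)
Your proposal is correct and matches the paper's approach exactly: the paper does not give a written proof of the corollary but simply states, immediately before it, that the result from \cite{C} on quasi-isometry invariance of simplicial $\ell^\phi$-cohomology combined with Theorem~\ref{main} yields the corollary directly. Your write-up fills in precisely the details (that $X_i$ is quasi-isometric to $M_i$, that the hypotheses of \cite{C} are met, and the composition of isomorphisms) that the paper leaves implicit.
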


A metric space $X$ is \textit{uniformly contractible} if there exists an increasing function $\varphi:[0,+\infty)\to [0,+\infty)$ such that for every $x\in X$ and $r>0$ the ball $B(x,r)=\{y\in X:|x-y|< r\}$ is contractible in $B(x,\varphi(r))$.

\section{Preliminaries}

\subsection{Cochain complexes and bicomplexes}

A \textit{cochain complex} (or simply \textit{complex} from here on) will be a sequence of vector topological spaces $C^*=\left\{C^k\right\}_{k\in\N}$ together with a sequence of continuous linear maps $d=d_k:C^k\to C^{k+1}$ such that $d\circ d=0$. We denote it by $(C^*,d_*)$ or $(C^*,d)$ and say that $d$ is the \textit{derivative} of the complex. 

The \textit{cohomology} of the complex $(C^*,d)$ is the family of vector topological spaces 
$$H^k=\frac{\Ker d_k}{\I d_{k-1}},$$
where $d_{-1}=0$.

If $(C^*,d)$ is a complex of Banach spaces, we also consider its reduced cohomology by the family of Banach spaces (or more generally, Frechét spaces)
$$\overline{H}^k=\frac{\Ker d_k}{\overline{\I d_{k-1}}}.$$

A \textit{cochain map} between two cochain complexes $(C^*,d_*)$ and $(D^*,\delta_*)$ is a family of continuous linear maps $f_k:C_k\to D_k$ that commutes with the derivatives, i.e. $\delta_k\circ f_k=f_{k+1}\circ d_k$ for every $k$. A cochain map naturally defines a continuous linear map between the corresponding (relative) cohomology spaces.

Two cochain maps $f_*,g_*:(C^*,d_*)\to (D^*,\delta_*)$ are \textit{homotopic} if there exists a family of continuous linear maps $p_k:C^k\to D^{k-1}$ such that 
\begin{equation}
\left\{\begin{array}{cc}
    p_{k+1}\circ d_k+\delta_{k-1}\circ p_k= f_k - g_k &\text{ if }k\geq 1 \\
    p_1\circ d_0 = f_0-g_0 &.
\end{array}\right.    
\end{equation}
This implies that $f_*$ and $g_*$ define the same map in cohomology. We say that two complexes $(C^*,d_*)$ and $(D^*,\delta_*)$ are \textit{homotopically equivalent} if there exist cochain maps $f_*:C^*\to D^*$, $g_*:D^*\to C^*$  such that $f_*\circ g_*$ and  $g_*\circ f_*$ are homotopic to the identity. If $D^*\subset C^*$, $d_*=\delta_*$ and $g_*$ is the inclusion we said that $(C^*,d_*)$ retracts to $(D^*,\delta_*)$. 

If two complexes are homotopically equivalent, then their cohomologies are isomorphic (in the sense of vector topological spaces). If they are in addition complexes of Banach (or Fréchet) spaces, then their reduced cohomologies are also isomorphic.

\medskip

The method we will use to prove Theorem \ref{main} is a bicomplex argument as the used, for example, in \cite[Theorem 8.9]{BT}, \cite{G} and \cite{Pa95}.

By a \textit{bicomplex} we mean a family of topological vector spaces $\{C^{k,m}\}_{k,m\in\N}$ together with linear continuous maps $d':C^{k,m}\to C^{k+1,m}$ and $d'':C^{k,m}\to C^{k,m+1}$ sucht that $d'\circ d'=0$, $d''\circ d''=0$ and $d'\circ d''+d''\circ d'=0$. It will be denoted by $(C^{*,*},d',d'')$. The \textit{rows} and \textit{colums} of the bicomplex are the complexes of the form $(C^{*,m},d')$ and $(C^{k,*},d'')$ respectively.

Observe that, if $(C^{*,*},d',d'')$ is a bicomplex, then one can define the \textit{kernels horizontal} and \textit{vertical} complexes $(E^*,d')$ and $(F^*,d'')$, where $E^k=\Ker d''|_{C^{k,0}}$ and $F^m=\Ker d'|_{C^{0,m}}$. Observe that the condition $d'\circ d''+d''\circ d'=0$ implies that they are cochain complexes.

And important fact about bicomplexes that we will use in the proof of Theorem \ref{main} is the following result, whose proof can be found in \cite{Pa95,G,S}.

\begin{theorem}\label{bicomplejo}
Let $(C^{*,*},d',d'')$ a bicomplex. Suppose that every row $(C^{*,m},d')$ retracts to $(F^0\to 0\to 0 \to \cdots)$ and every colum $(C^{k,*},d'')$ retracts to $(E^0\to 0\to 0\to \cdots)$. Then the kernels horizontal and vertical complexes of the bicomplex are homotopically equivalent. 
\end{theorem}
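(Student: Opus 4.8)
The proof is a version of the classical "tic-tac-toe" argument for bicomplexes (as in Bott–Tu), adapted to the topological/Banach setting. The strategy is to build explicit cochain maps between the kernel horizontal complex $(E^*,d')$ and the kernel vertical complex $(F^*,d'')$ and then show the two compositions are homotopic to the identity, using the retraction homotopies that are given by hypothesis. First I would fix notation: let $\{s_k^m\colon C^{k,m}\to C^{k-1,m}\}$ (together with maps onto $F^m$ in degree $0$) be the homotopy operators witnessing that the $m$-th row retracts to $(F^m\to 0\to\cdots)$, and similarly let $\{t_k^m\colon C^{k,m}\to C^{k,m-1}\}$ witness that the $k$-th column retracts to $(E^k\to 0\to\cdots)$. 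All of these are continuous linear maps, so every operator I construct by composing and adding finitely many of them will automatically be continuous and linear — this is what lets the argument run verbatim in the category of topological vector spaces.

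\textbf{Construction of the cochain map.} Given a class in $E^m=\Ker d''|_{C^{m,0}}$ represented by $\alpha\in C^{m,0}$ with $d''\alpha=0$ and $d'\alpha\in\I$ of the previous $d'$ only up to the kernel complex, I would use the column retractions to "lift" $\alpha$ diagonally: produce $\alpha=\alpha_0\in C^{m,0}$, then $\alpha_1\in C^{m-1,1}$ with $d''\alpha_1=d'\alpha_0$ (possible since $d'\alpha_0$ is $d''$-closed because $d''d'\alpha_0=-d'd''\alpha_0=0$, and the column $(C^{m-1,*},d'')$ is exact in positive degrees by the retraction hypothesis), and inductively $\alpha_{j+1}\in C^{m-j-1,j+1}$ with $d''\alpha_{j+1}=d'\alpha_j$. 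After $m$ steps one reaches $\alpha_m\in C^{0,m}$, and $d'\alpha_m$ is both $d'$-closed trivially and satisfies $d''(d'\alpha_m)=-d'd''\alpha_m=-d'd'\alpha_{m-1}=0$, hence $d'\alpha_m\in C^{0,m}$ represents... wait — more precisely, $d''\alpha_m = d'\alpha_{m-1}$ and applying $d'$ gives $d'\alpha_m$ with $d'(d'\alpha_m)=0$; one checks $d'\alpha_m$ is $d''$-closed, but we actually want an element of $F^m=\Ker d'|_{C^{0,m}}$. The correct endpoint is that $\alpha_m$ itself satisfies $d'\alpha_m=0$ is \emph{not} automatic; instead one takes the zig-zag so that the final output lands in $F^m$ after one more application of a row homotopy. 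I would make each lift explicit by setting $\alpha_{j+1}=t^{\,j+1}(d'\alpha_j)$ using the column homotopy operator, so the whole map $f\colon E^m\to F^m$, $\alpha\mapsto (\text{last term of the zig-zag})$, is a finite composition of continuous linear maps, and checking $d''\circ f=f\circ d'$ (up to the degree-$0$ adjustment in the homotopy formula) is a direct diagram chase using $d'd''+d''d'=0$ together with the two homotopy identities. Symmetrically one builds $g\colon F^m\to E^m$ using the row homotopies.

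\textbf{Homotopy of the compositions with the identity.} The composite $g\circ f\colon E^*\to E^*$ differs from the identity by a telescoping sum of terms each of which, by the homotopy identity $p_{k+1}d_k+\delta_{k-1}p_k=f_k-g_k$ applied in the appropriate row or column, is a boundary plus a term handled at the next stage of the zig-zag; assembling the row homotopy operators into a single operator $P_k\colon E^k\to E^{k-1}$ (again a finite sum of composites of the given continuous operators) yields $P_{k+1}d'+d'P_k=\mathrm{id}-g\circ f$, with the degree-$0$ case matching the second line of the homotopy definition. The same works for $f\circ g$ on $F^*$. Since all operators in sight are continuous and linear, $(E^*,d')$ and $(F^*,d'')$ are homotopically equivalent in the sense defined above, and Theorem \ref{bicomplejo} follows; the remark that homotopic equivalence gives isomorphic (reduced) cohomology then applies. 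I expect the main obstacle to be bookkeeping rather than conceptual: one must be careful that the retraction hypotheses give contracting homotopies in \emph{positive} degrees only, with degree $0$ mapping onto $F^0$ (resp.\ $E^0$), so the zig-zag has the right length and the boundary-degree terms in the homotopy formulas are matched correctly; keeping the indices of $d'$, $d''$ and the two families of homotopy operators straight through the double induction is where errors are most likely to creep in.
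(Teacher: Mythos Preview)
The paper does not actually prove Theorem \ref{bicomplejo}: immediately before the statement it says ``whose proof can be found in \cite{Pa95,G,S}'' and no argument is given. So there is nothing to compare your write-up against inside the paper itself; your plan is in fact the standard zig-zag/total-complex argument that those references carry out, and conceptually it is the right one.

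That said, as written your zig-zag does not typecheck, and this is a genuine gap rather than just bookkeeping. You start with $\alpha_0\in C^{m,0}$ and then ask for $\alpha_1\in C^{m-1,1}$ with $d''\alpha_1=d'\alpha_0$. But $d'\alpha_0\in C^{m+1,0}$ while $d''\alpha_1\in C^{m-1,2}$; these live in different summands, so the equation is meaningless, and the sentence ``the column $(C^{m-1,*},d'')$ is exact in positive degrees'' cannot be invoked to solve it. The same mismatch propagates through your induction. You seem to sense this (``wait --- more precisely\ldots''), but the fix you sketch is not a fix: applying ``one more row homotopy'' does not land you in $F^m$ either unless the preceding steps are set up correctly.

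The clean way to organize the argument (and what the cited references do) is to pass through the total complex $T^n=\bigoplus_{k+m=n}C^{k,m}$ with $D=d'+d''$. The inclusions $i\colon E^*\hookrightarrow T^*$ (into the $(n,0)$ slot) and $j\colon F^*\hookrightarrow T^*$ (into the $(0,n)$ slot) are cochain maps; using the column homotopies $t$ one builds a continuous retraction and homotopy showing $i$ is a homotopy equivalence, and symmetrically the row homotopies $s$ do the same for $j$. Composing gives the desired homotopy equivalence $E^*\simeq F^*$. If you prefer to phrase it as a staircase, the correct step from $C^{m-j,j}$ to $C^{m-j-1,j+1}$ is: first apply the row homotopy $s$ to move from $C^{m-j,j}$ to $C^{m-j-1,j}$, then apply $d''$ to move up to $C^{m-j-1,j+1}$ (or the symmetric version with $t$ and $d'$), not the relation you wrote. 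Once the indices are straightened out, your outline --- continuity for free from finite compositions, telescoping for the homotopies $P_k$ --- goes through exactly as you say.
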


\subsection{Young functions and Orlicz spaces}

By a \textit{Young function} we mean a pair and convex function $\phi:\R\to [0,+\infty)$ that satisfy $\phi(t)=0$ if, and only if, $t=0$.

If $(Z,\mu)$ is a measure space and $f:Z\to\R$ is a measurable function, we define
$$\rho_\phi(f)=\int_Z \phi\left(f(x)\right)d\mu(x).$$
The Orlicz space of $(Z,\mu)$ associated to $\phi$ is the Banach space $L^\phi(Z)=L^\phi(Z,\mu)$ of classes of functions $f:Z\to\R$ for which there exists a constant $\alpha>0$ such that
$\rho_\phi(f/\alpha)<+\infty$, equipped with the Luxemburg norm
$$\|f\|_{L^\phi}=\inf\left\{\alpha>0 : \rho_\phi\left(\frac{f}{\alpha}\right)d\mu\leq 1\right\}.$$
If $Z$ is countable and $\mu$ is the counting measure, we write $L^\phi(Z)=\ell^\phi(Z)$.

Observe that, if we consider the Young function $\phi_p(t):=|t|^p$ with $p\geq 1$, then $L^\phi(Z)=L^p(Z)$. Other examples of Young functions are
$$\phi_{p,\kappa}(t)=\frac{|t|^p}{\log(e+|t|^{-1})},\ (p\geq 1, \kappa\geq 0),$$
which are used in \cite{C} for studying the large scale geometry of Heintze groups. These function have some nice properties: they are $N$-functions with the $\Delta_2$-condition if $(p,\kappa)\neq (1,0)$ (see definitions in \cite[Chapter II]{RR}). An example of Young function that does not satisfy the previous conditions can be $\phi(t)=e^{|t|}-1$.

\begin{remark}\label{ObsEqivalencia}
If $\lambda\geq 1$ is any constant, then $\rho_{\lambda\phi}(f)=\lambda\rho_{\phi}(f)\geq \rho_\phi(f)$, which implies $\|f\|_{L^\phi}\leq \|f\|_{L^{\lambda\phi}}$. Furthermore, the convexity of $\phi$ implies $\rho_{\lambda\phi}(f)\leq \rho_\phi(\lambda f)$, hence $\|f\|_{L^{\lambda\phi}}\leq \lambda\|f\|_{L^\phi}$. We conclude that for every $\lambda>0$ the spaces $L^\phi(Z)$ and $L^{\lambda\phi}(Z)$ are isomorphic, and
$$K^{-1}\|\ \|_{L^\phi}\leq \|\ \|_{L^{\lambda\phi}}\leq K \|\ \|_{L^\phi},$$
where $K=\max\{\lambda,\lambda^{-1}\}$.
\end{remark}

For more details about Orlicz spaces we refer to \cite{RR}.

\subsection{Orlicz cohomology}\label{OC}

We say that a simplicial complex $X$ equipped with a length distance has \textit{bounded geometry} if it has finite dimension and there exist a constant $C>0$ and a function $N:[0,+\infty)\to\N$ such that
\begin{enumerate}
    \item[(c)] the diameter of every simplex is bounded by $C$; and
    \item[(d)] for every $r\geq 0$, the number of simplices contained in a ball of radius $r$ is bounded by $N(r)$.
\end{enumerate}
Observe that the triangulation in Theorem \ref{main} (satisfying conditions (a) and (b)) has bounded geometry.

Denote by $X_k$ the set of $k$-simplices in $X$ and consider the cochain complex $(\ell^\phi(X_*),\delta)$,
with the usual coboundary operator $\delta$ defined by $\delta\theta(\sigma)=\theta(\partial\sigma)$, where if $\Delta=(v_0,\ldots,v_k)\in X_k$,
$$\partial\Delta=\sum_{i=0}^k \partial_i\Delta=\sum_{i=0}^k(v_0,\ldots,\hat{v_i},\ldots,v_k).$$
It is not difficult to see, using bounded geometry, that $\delta:\ell^\phi(X_k)\to \ell^\phi(X_{k+1})$ is welld-defined and continuous for every $k$. 
The cohomology of this complex is called the \textit{simplicial Orlicz cohomology} of $X$ associated to the Young function $\phi$ (or, more simply, the $\ell^\phi$\textit{-cohomology} of $X$) and denoted by $\ell^\phi H^*(X)$. Since $(\ell^\phi(X_*),\delta)$ is a complex of Banach spaces, we also consider the \textit{reduced simplicial Orliz cohomology} of $X$ associated to $\phi$ (or \textit{reduced $\ell^\phi$-cohomology} of $X$) as its reduced cohomology. We denote it by $\ell^\phi \overline{H}^*(X)$.

\medskip

Now take a Riemannian manifold $M$ and denote by $\Omega^k(M)$ the space of all (smooth) differential $k$-forms on $M$. Consider
$$L^\phi\Omega^k(M)=\{\omega\in\Omega^k(M) : \|\omega\|_{L^\phi},\|d\omega\|_{L^\phi}<+\infty\},$$ 
equipped with the norm $|\omega|_{L^\phi}=\|\omega\|_{L^\phi}+\|d\omega\|_{L^\phi}$. Here $d$ denotes the usual exterior derivative and $\|\omega\|_{L^\phi}$ is the Luxemburg norm of the function
$$x\mapsto|\omega|_x=\sup\left\{|\omega_x(u_1,\ldots,u_k)| : u_i\in T_xM \text{ for }i=1,\ldots,k, \text{ with } \|u_i\|_x=1\right\}$$
in the measure space $(M,dV)$, where $\|\ \|_x$ is the Riemannian norm on the tangent space $T_xM$ and $dV$ is the Riemannian volume on $M$. 

We can consider the cohomology of the complex $(L^\phi\Omega^k(M),d)$, which we call the \textit{smooth Orlicz-de Rham cohomology} of $M$ associated to $\phi$ (or \textit{smooth $L^\phi$-cohomology} of $M$) and denote it by $L^\phi H^{*}_{s}(M)$. 

Since $(L^\phi \Omega^*(M),d)$ is not a complex of Banach we take $(L^\phi C^*(M),d)$ the complex of their  completions  where $d$ is the continuous extension of the exterior derivative. From this we obtain the \textit{Orlicz-de Rham cohomology} of $M$ associated with $\phi$ (or $L^\phi$\textit{-cohomology} of $M$), denoted by $L^\phi H^*(M)$ 
and the \textit{reduced Orlicz-de Rham cohomology} of $M$ (or \textit{reduced} $L^\phi$\textit{-cohomology} of $M$), denoted by $L^\phi\overline{H}^*(M)$. 

In \cite{KP} it is prove that, under certain hypothesis on $\phi$, the Orlicz-de Rham cohomology is isomorphic to the smooth version. We give (at the end of this work) a proof of that for any Young function under some hypotesis on the manifold $M$.

\begin{remark}\label{ObsDef}
A \textit{measurable $k$-form} on $M$ is a function $x\mapsto\omega_x$, where $\omega_x$ is an alternating $k$-linear form on $T_xM$, such that the coefficients of $\omega$ for every parmetrization of $M$ are all measurable. We denote by $L^\phi(M,\Lambda^k)$ the space of $L^\phi$-integrable measurable $k$-forms up to almost everywhere zero forms. It is a Banach space equipped with the Luxemburg norm $\|\ \|_{L^\phi}$. 

Since $L^\phi\Omega^k(M)\subset L^\phi(M,\Lambda^k)$ and the inclusion is continuous, one can prove using Hölder's inequality ($\|fg\|_{L^1}\leq 2\|f\|_{L^\phi}\|g\|_{L^{\phi^*}}$, where $\phi^*$ is complementary Young function of $\phi$, see \cite[Section 3.3]{RR}) that $L^\phi C^k(M)$ can be seen as a space of measurable $k$-forms in $L^\phi(M,\Lambda^k)$ that have weak derivatives in $L^\phi(M,\Lambda^{k+1})$. 

We say that a measurable $(k+1)$-form $\varpi$ is the weak derivative of $\omega\in L^\phi(M,\Lambda^k)$ if for every differential $(n-k-1)$-form with compact support $\beta$ one has 
$$\int_M \varpi\wedge\beta=(-1)^{k+1}\int_M \omega\wedge d\beta.$$
\end{remark}

Throughout this work we will say $k$-\textit{form} to mean a measurable $k$-form and \textit{differential $k$-form} to mean a smooth one. 

\section{Poincare's lemma for Orlicz cohomology}

The following result is previously proved in \cite{GK19}. We present a proof based on Lemma 8 of \cite{Pa95} in order to show a construction that will be used in the proof of Theorem \ref{main}.

\begin{lemma}\label{Poincare}
Let $B=B(0,1)$ be the open unit ball in the Euclidean space $\R^n$.
Then both complexes $(L^\phi \Omega^*(B),d)$ and $(L^\phi C^*(B),d)$ retract to the complex $(\R\to 0\to 0\to \ldots)$.
\end{lemma}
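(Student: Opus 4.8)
The plan is to construct an explicit contracting homotopy operator on forms, following the classical Poincaré lemma construction (the "cone" or "radial integration" homotopy), and then verify that it is bounded with respect to the relevant Orlicz norms. Concretely, for the open unit ball $B=B(0,1)$ I would use the standard homotopy $K$ defined on a $k$-form $\omega = \sum_I f_I\, dx_I$ by integrating along rays from the origin:
$$
(K\omega)_x = \sum_{i} \int_0^1 t^{k-1}\, \iota_{x}\big(\omega_{tx}\big)\, dt,
$$
where $\iota_x$ is contraction with the radial vector field $x = \sum x_j \partial_j$; equivalently, in multi-index notation, $K\omega = \sum_I \sum_{j} (-1)^{\,j}\, x_{i_j}\Big(\int_0^1 t^{k-1} f_I(tx)\,dt\Big)\, dx_{i_1}\wedge\cdots\widehat{dx_{i_j}}\cdots\wedge dx_{i_k}$. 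The algebraic identity $dK + Kd = \mathrm{id}$ on $k$-forms for $k\ge 1$, and $Kd = \mathrm{id} - \mathrm{ev}_0$ on functions (where $\mathrm{ev}_0$ evaluates at the origin, giving the map to $\R$), is the usual computation and I would only sketch it; it shows that the inclusion $(\R\to 0\to\cdots)\hookrightarrow (L^\phi\Omega^*(B),d)$ is a homotopy equivalence \emph{once} we know $K$ preserves the spaces in question.

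The real work, and the point the paper wants to highlight for later use, is the boundedness of $K$ on the Orlicz spaces $L^\phi\Omega^k(B)$ (and hence, by density, on the completions $L^\phi C^k(B)$). For this I would estimate $\|K\omega\|_{L^\phi}$ in terms of $\|\omega\|_{L^\phi}$. Writing $g(x) = \int_0^1 t^{k-1} f(tx)\, dt$ for a scalar coefficient $f$, the substitution $y = tx$ and a Minkowski-type integral inequality for the Luxemburg norm (the norm is itself given by an infimum over scalings, so one argues via the modular $\rho_\phi$ and convexity of $\phi$, i.e. $\phi\big(\int_0^1 h(t)\,dt\big)\le \int_0^1 \phi(h(t))\,dt$) reduces the bound to controlling $\int_0^1 t^{k-1}\, \big\|\, f(t\,\cdot)\,\big\|_{L^\phi(B)}\, dt$. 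Since dilation by $t\in(0,1)$ on the fixed ball $B$ only \emph{shrinks} the domain of integration and rescales volume by $t^n$, one gets $\rho_\phi(f(t\cdot)) = t^{-n}\int_{B(0,t)}\phi(f)\le t^{-n}\rho_\phi(f)$, and hence $\|f(t\cdot)\|_{L^\phi(B)}\le \max(1,t^{-n/?})\cdots$ — more carefully, one uses $\|f(t\cdot)\|_{L^\phi(B)} \le C\, \|f\|_{L^\phi(B)}$ with $C$ independent of $t\in(0,1)$ after accounting for the behaviour of the Luxemburg norm under the modular scaling $\rho_\phi(u/\alpha)$; the factor $t^{k-1}\le 1$ and the presence of the bounded factors $x_{i_j}$ (with $|x_{i_j}|\le 1$ on $B$) then make the $t$-integral converge and yield $\|K\omega\|_{L^\phi}\le C_n\|\omega\|_{L^\phi}$.

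The main obstacle is precisely this Orlicz-norm estimate: unlike the $L^p$ case one cannot simply pull scalars out of the norm, so I would work at the level of the modular $\rho_\phi$, exploit convexity of $\phi$ (Jensen's inequality in the $t$-variable) and the behaviour of $\rho_\phi$ under dilations of the domain, and only at the end pass back to the Luxemburg norm; Remark \ref{ObsEqivalencia}-type scaling arguments handle the passage between $\rho_\phi$ and $\|\cdot\|_{L^\phi}$. A secondary point to check is that $K$ maps $L^\phi\Omega^k(B)$ into itself as a space of \emph{smooth} forms with $dK\omega$ again in $L^\phi$ — smoothness is clear from differentiating under the integral sign, and $\|dK\omega\|_{L^\phi}\le \|\omega\|_{L^\phi}+\|Kd\omega\|_{L^\phi}$ from the homotopy identity reduces its finiteness to the already-established bound applied to $d\omega$. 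Finally, since $L^\phi C^*(B)$ is the completion of $L^\phi\Omega^*(B)$ and $K$ is bounded for the norm $|\cdot|_{L^\phi} = \|\cdot\|_{L^\phi}+\|d\cdot\|_{L^\phi}$, it extends continuously to the completion and the same homotopy identity persists, giving the retraction for both complexes simultaneously.
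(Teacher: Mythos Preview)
Your proposal has a genuine gap: the single-base-point cone homotopy $K$ (integrating along rays from the origin) is \emph{not} bounded on $L^\phi$ in general, and the claim ``$\|f(t\,\cdot)\|_{L^\phi(B)}\le C\,\|f\|_{L^\phi(B)}$ with $C$ independent of $t\in(0,1)$'' is false. Already in the $L^p$ case one has $\|f(t\,\cdot)\|_{L^p(B)}=t^{-n/p}\|f\|_{L^p(tB)}$, so the Minkowski estimate yields only $\|K\omega\|_{L^p}\le \|\omega\|_{L^p}\int_0^1 t^{\,k-1-n/p}\,dt$, which diverges whenever $k\le n/p$. This is sharp: if $\chi_\epsilon$ is a smooth bump supported in $B(0,\epsilon)$ with $\|\chi_\epsilon\|_{L^p}=1$ and $\omega_\epsilon=\chi_\epsilon\,dx_1$, a direct computation gives $|K\omega_\epsilon(x)|\asymp \epsilon^{\,1-n/p}$ on the annulus $\epsilon\le |x|\le 1$, hence $\|K\omega_\epsilon\|_{L^p}\asymp \epsilon^{\,1-n/p}\to\infty$ as $\epsilon\to 0$ when $p<n$. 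The same phenomenon obstructs boundedness for a general Young function $\phi$. A second issue is in degree~$0$: your retraction map is $\mathrm{ev}_0$, evaluation at the origin, which is neither well-defined on the completion $L^\phi C^0(B)$ nor continuous for $|\cdot|_{L^\phi}$ on $L^\phi\Omega^0(B)$.

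The paper's proof addresses exactly this point. It first builds the cone operator $\chi_x$ based at an arbitrary $x\in B$ (your $K$ is $\chi_0$), notes explicitly that $\chi_x$ alone does not give an $L^\phi$-primitive, and then \emph{averages} over the base point:
\[
h(\omega)=\frac{1}{\Vol(\tfrac12 B)}\int_{\frac12 B}\chi_x(\omega)\,dx.
\]
After the change of variables $z=ty+(1-t)x$, this averaging converts the radial integral into a convolution-type estimate
\[
|h(\omega)|_y\preceq \int_{B(y,2)}|z-y|^{\,1-n}\,|\omega|_z\,dz,
\]
whose kernel $|z-y|^{1-n}$ is integrable on bounded sets; Jensen's inequality in the $z$-variable and Fubini then give $\|h(\omega)\|_{L^\phi}\preceq\|\omega\|_{L^\phi}$ with constants depending only on $n$. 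The averaged degree-$0$ map becomes $h(f)=\frac{1}{\Vol(\frac12 B)}\int_{\frac12 B} f$, which \emph{is} continuous $L^\phi\to\R$. The remaining parts of your outline (the homotopy identity, the bound on $dh$ via the identity, and extension to the completion) are correct and match the paper, but the key analytic step---replacing the cone over a point by a cone averaged over a small ball---is missing from your argument and cannot be bypassed.
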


\begin{proof}
For a fixed $x\in B$ we consider $\varphi_x:[0,1]\times B\to B$, $\varphi_x(t,y)=ty+(1-t)x$ and $\eta_t:B\to [0,1]\times B$, $\eta_t(y)=(t,y)$. We denote $\frac{\partial}{\partial t}=(1,0)\in [0,1]\times B$ and define the $\frac{\partial}{\partial t}$-contraction of a $k$-form $\omega$ as the $(k-1)$-form 
$$\iota_{\frac{\partial}{\partial t}}\omega_y (u_1,\ldots,u_{k-1})=\omega_y\left(\mathsmaller{\frac{\partial}{\partial t}},u_1,\ldots,u_{k-1}\right).$$

Now we define, for $y\in B$, $\omega\in L^\phi\Omega^k(B)$ and $u_1,\ldots,u_{k-1}\in T_yB=\R^n$,
\begin{align*}
\chi_x(\omega)_y(u_1,\ldots,u_{k-1})&=\left(\int_0^1 \eta_t^*\left(\iota_{\frac{\partial}{\partial t}}\varphi_x^*\omega\right)dt\right)_y(u_1,\ldots,u_{k-1})\\   &=\int_0^1 \eta_t^*\left(\iota_{\frac{\partial}{\partial t}}\varphi_x^*\omega\right)_y(u_1,\ldots,u_{k-1})dt,
\end{align*}
where $\eta_t^*$ and $\varphi^*$ are the classical pull-back transformations of the respective functions. 

Observe that the coefficients of 
$\eta_t^*\left(\iota_{\frac{\partial}{\partial t}}\varphi_x^*\omega\right)_y$ are smooth on $t,x$ and $y$. Thus, using the Leibniz integral rule, we can observe that $\chi_x(\omega)$ is a differential $(k-1)$-form.

For every $(k-1)$-simplex $\sigma$ we have
\begin{align*}
\int_\sigma \chi_x(\omega) 
= \int_\sigma\int_0^1 \eta_s^*(\iota_{\frac{\partial}{\partial t}}\varphi_x^*\omega)ds
= \int_{[0,1]\times\sigma}\varphi_x^*\omega
= \int_{\varphi_x([0,1]\times\sigma)}\omega
= \int_{C_\sigma}\omega,
\end{align*}
where $C_\sigma$ is the cone centered at $x$, defined as follows: If $\sigma=(x_0,\ldots,x_{k-1})$, then $C_\sigma=(x,x_0,\ldots,x_{k-1})$.

Suppose that $\sigma$ is a $k$-simplex in $B$ with $\partial \sigma=\tau_0+\cdots+\tau_k$ and $\omega\in \Omega^k(B)$. Then, using Stoke's theorem, we have
\begin{align*}
\int_\sigma \chi_x(d\omega) = \int_{C_\sigma} d\omega = \int_{\partial C_\sigma}\omega = \int_\sigma \omega-\sum_{i=0}^k \int_{C_{\tau_i}}\omega =\int_\sigma \omega- \int_{\partial\sigma}\chi_x(\omega)=\int_\sigma \omega- \int_{\sigma}d\chi_x(\omega).   
\end{align*}
Since this is true for every $k$-simplex, we conclude that 
\begin{equation}\label{dos}
\chi_x d+d\chi_x=Id    
\end{equation}
(see for example \cite[Chapter IV]{W}). Observe that if $\omega$ is closed, then $\chi_x(\omega)$ is a primitive of $\omega$, so it is enough to prove the classic Poincaré's lemma. However, in our case we need a $L^\phi$-primitive, so we take a convenient average. 

Define
$$h(\omega)=\frac{1}{\Vol\left(\frac{1}{2}B\right)}\int_{\frac{1}{2}B}\chi_x(\omega)dx,$$
where $\frac{1}{2}B=B\left(0,\frac{1}{2}\right)$.

For the same argument as before $h(\omega)$ is smooth and
\begin{equation*}
dh(\omega)=\frac{1}{\Vol\left(\frac{1}{2}B\right)}\int_{\frac{1}{2}B}d\chi_x(\omega)dx.
\end{equation*}
Here is important the fact that we are integrating on a ball whose closure is strictly contained in $B$. Therefore, using (\ref{dos}) we have
\begin{equation}\label{homot}
dh(\omega)+h(d\omega)=\omega
\end{equation}
for all $\omega\in\Omega^k(B)$ with $k\geq 1$.

We have to prove that $h$ is well-defined and continuous from $L^\phi\Omega^k(B)$ to $L^\phi\Omega^{k-1}(B)$. To this end we first bound $|\chi_x(\omega)|_y$ for $y\in B$ and $\omega\in\Omega^k(B)$. Since $\iota_{\frac{\partial}{\partial t}}\varphi^*\omega$ is a form on $[0,1]\times B$ that is zero in the direction of $\frac{\partial}{\partial t}$, we have 
$|\eta_t^*(\iota_{\frac{\partial}{\partial t}}\varphi^*\omega)|_y=|\iota_{\frac{\partial}{\partial t}}\varphi^*\omega|_{(t,y)}$ for every $t\in (0,1)$ and $y\in B$. After a direct calculation we get the estimate $$\left|\iota_{\frac{\partial}{\partial t}}\varphi^*\omega\right|_{(t,y)}\leq t^{k-1}|y-x||\omega|_{\varphi(t,y)}.$$ 
Hence, using the assumption that $t\in (0,1)$, we can write
\begin{equation}\label{uno}
|\chi(\omega)|_y\leq \int_0^1|y-x||\omega|_{\varphi(t,y)}dt.
\end{equation}

Consider the function $u:\R^n\to\R$ defined by $u(z)=|\omega|_z$ if $z\in B$ and $u(z)=0$ in the other case. Using (\ref{uno}) and the change of variables $z=ty+(1-t)x$  we have
\begin{align*}
\Vol\left(\mathsmaller{\frac{1}{2}}B\right)|h(\omega)|_y		
&\leq \int_{B\left(ty,\frac{1-t}{2}\right)}\int_0^1|z-y|u(z)(1-t)^{-n-1}dtdz \\
&=  \int_{B(y,2)}|z-y|u(z)\left(\int_0^1 \mathds{1}_{B\left(ty,\frac{1-t}{2}\right)}(z)(1-t)^{-n-1} dt\right)dz. 	
\end{align*}
Observe that $\mathds{1}_{B\left(ty,\frac{1-t}{2}\right)}(z)=1$ implies that $|z-y|\leq 2(1-t)$. Therefore
$$\int_0^1\mathds{1}_{B\left(ty,\frac{1-t}{2}\right)}(z)(1-t)^{-n-1}dt\leq \int_0^{1-\frac{1}{2}|z-y|}(1-t)^{-n-1}dt= \int_{\frac{1}{2}|z-y|}^1 r^{-n-1}dr\preceq \frac{1}{|z-y|^n}.$$
This implies
$$Vol(\mathsmaller{\frac{1}{2}}B)|h(\omega)|_y\preceq \int_{B(y,2)}|z-y|^{1-n}u(z)dz,$$
where $f\preceq g$ means $f\leq \text{const.} g$. (We also write $f\asymp g$ if $f\preceq g$ and $g\preceq f$.)

Using this estimate we have
\begin{align*}
\|h(\omega)\|_{L^\phi}
    &\preceq \inf\left\{\gamma>0 : \int_B\phi\left(\int_{B(y,2)}|z-y|^{1-n}\frac{u(z)}{\alpha}dz\right)dy\leq 1\right\}.
\end{align*}
Since $\int_{B(y,2)}|z-y|^{1-n}dz<+\infty$, we can use Jensen's inequality and write
\begin{align*}
\|h(\omega)\|_{L^\phi}
    &\preceq \inf\left\{\alpha>0 : \Vol(B(0,3))\int_B\int_{B(0,3)}\phi\left(\frac{u(z)}{\Vol(B(0,3))\alpha}\right)\frac{1}{|z-y|^{n-1}} dz dy\leq 1\right\}\\
    &=\inf\left\{\alpha>0 : \Vol(B(0,3))\int_{B(0,3)}\phi\left(\frac{|\omega|_z}{\Vol(B(0,3))\alpha}\right)\left(\int_B\frac{dy}{|z-y|^{n-1}}\right)dz\leq 1\right\}.
\end{align*}
We have that there exists a constant $K>0$ such that $\int_B\frac{dy}{|z-y|^{n-1}}\leq K$ for all $z\in B(0,3)$, thus
$$\|h(\omega)\|_{L^\phi}\preceq \Vol(B(0,3))\|\omega\|_{L^{\tilde{K}\phi}}\preceq \|\omega\|_{L^\phi},$$
where $\tilde{K}=K\Vol(B(0,3))$. 

By the identity $dh(\omega)=\omega-h(d\omega)$ we also have 
$$\|dh(\omega)\|_{\phi}\leq \|\omega\|_{\phi}+\|h(d\omega)\|_{\phi}\preceq \|\omega\|_{\phi}+\|d\omega\|_{\phi}.$$
We conclude that $h$ is well-defined and bounded for the norm $|\ \ |_{L^\phi}$ in all degrees $k\geq 1$.

If $\omega=df$ for certain function $f$ we observe that
$$\eta_t^*(\iota_{\frac{\partial}{\partial t}}\varphi^*_x df) (y)= df_{\varphi_x(t,y)}(y-x)=(f\circ\gamma)'(t),$$
where $\gamma$ is the curve $\gamma(t)=\varphi_x(t,y)$. Then $\chi_x(df)(y)=f(y)-f(x)$, from which we get
$$h(df)=f-\frac{1}{Vol\left(\mathsmaller{\frac{1}{2}}B\right)}\int_{\frac{1}{2}B} f.$$
We define $h:L^\phi\Omega^0(B)\to L^p\Omega^{-1}(B)=\R$ by
$h(f)=\frac{1}{Vol\left(\mathsmaller{\frac{1}{2}}B\right)}\int_{\frac{1}{2}B} f$,
which is clearly continuous because $\frac{1}{2}B$ has finite Lebesgue measure. Therefore 
\begin{equation}\label{homot2}
h(f)+h(df)=f.
\end{equation}
This shows that $(L^\phi\Omega^*,d)$ retracts to $(\R\to 0\to 0\to \cdots)$.

Note that, since $h$ is bounded, then it can be extended continuously to $L^\phi C^k(B)$ for every $k\geq 0$. Equalities \eqref{homot} and \eqref{homot2} are also true for every $\omega\in L^\phi C^k(B)$, which finishes the proof.
\end{proof}

\section{Proof of Theorem \ref{main}}

If $v$ is a vertex of $X$ we denote by $U_v$ the \textit{open star} of $v$, that is, the interior of the union of all simplices containing $v$. Observe that if $\Delta=(v_0,\ldots,v_m)\in  X_m$, then 
$$U_\Delta=U_{v_0}\cap\cdots\cap U_{v_m}$$
contains the interior of the simplex $\Delta$ as subset of $M$. By condition $(b)$ there exists a uniform constant $L\geq 1$ such that if $\Delta$ is a simplex of $X$, then $U_\Delta$ is $L$-biLipschitz homeomorphic to the open ball $B\subset\R^n$. Moreover, condition $(a)$ allows us to take a constant $N\in \N$ such that every point cannot belong to more than $N$ simplices of $X$. 

The triangulation $X$ is the nerve of the covering $\U=\left\{U_v : v\in X_0\right\}$. In fact, Theorem \ref{main} can be stated in terms of the existence of a convenient open covering.

For $k,m\geq 0$ we consider $C^{k,m}$ as the space of alternating functions
\begin{equation}\label{forma}
\omega=\prod_{\Delta\in X_m} \omega_\Delta,\ \omega_\Delta\in L^\phi C^k(U_\Delta).
\end{equation}
Where by \textit{alternating} we mean that for $i\neq j$, 
$$\omega_{(v_0,\ldots,v_i,\ldots,v_j,\ldots,v_m)}=-\omega_{(v_0,\ldots,v_j,\ldots,v_i,\ldots,v_m)}.$$ 

If $\omega\in C^{k,m}$, we consider
$$\varrho_\phi(\omega)=\sum_{\Delta\in X_m}\int_{U_\Delta}\phi\left(|\omega_\Delta|\right)dV,$$
where $dV$ is the volume on $M$. From this we take the Luxemburg norm
$$\|\omega\|_{C_\phi}=\inf\left\{\alpha>0 : \varrho_\phi\left(\frac{\omega}{\alpha}\right)\leq 1\right\}.$$

We define, for $\omega$ as in \eqref{forma}, 
$$d'\omega=(-1)^m\prod_{\Delta\in X_m}d\omega_\Delta,$$
and take
$$C_\phi^{k,m}=\left\{\omega\in C^{k,m} : \|\omega\|_{C_\phi},\|d'\omega\|_{C_\phi}<+\infty\right\},$$
which is naturally equipped with the norm $|\omega|_{C_\phi}=\|\omega\|_{C_\phi}+\|d'\omega\|_{C_\phi}$. Observe that $d': C_\phi^{k,m}\to  C_\phi^{k+1,m}$ is continuous and satisfy $d'\circ d'=0$.

For $\omega\in C_\phi^{k,m}$ and $\Delta\in X_{m+1}$ we put
$$(d''\omega)_{\Delta}=\sum_{i=0}^{m+1} (-1)^i \omega_{\partial_i\Delta}.$$

\begin{lemma}
$(C_\phi^{*,*},d',d'')$ is a bicomplex. 
\end{lemma}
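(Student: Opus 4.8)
The plan is to verify the four requirements in the definition of a bicomplex for $(C_\phi^{*,*},d',d'')$: continuity of $d'$ and $d''$, the relations $d'\circ d'=0$ and $d''\circ d''=0$, and the anticommutation $d'\circ d''+d''\circ d'=0$. The properties of $d'$ were already recorded right after its definition, so only $d''$ and the anticommutation are at issue. First I would establish everything on the ambient spaces $C^{k,m}$ at the algebraic level and bound $d''$ in the $\|\cdot\|_{C_\phi}$-seminorm, and then deduce the statement for $C_\phi^{*,*}$.

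First, $d''$ is a well-defined linear map $C^{k,m}\to C^{k,m+1}$: for $\Delta=(v_0,\dots,v_{m+1})\in X_{m+1}$ and $0\le i\le m+1$ one has $U_\Delta=U_{v_0}\cap\cdots\cap U_{v_{m+1}}\subseteq U_{\partial_i\Delta}$, and restricting a form to a smaller open set increases neither $\|\cdot\|_{L^\phi}$ nor $\|d\cdot\|_{L^\phi}$ and commutes with $d$, so it extends to a continuous map $L^\phi C^k(U_{\partial_i\Delta})\to L^\phi C^k(U_\Delta)$; hence $(d''\omega)_\Delta=\sum_{i=0}^{m+1}(-1)^i\,\omega_{\partial_i\Delta}|_{U_\Delta}\in L^\phi C^k(U_\Delta)$. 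Transposing two vertices of $\Delta$ permutes the faces $\partial_i\Delta$ and produces exactly the compensating sign, each $\omega_{\partial_i\Delta}$ being alternating, so $d''\omega$ is alternating.

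The point with content is the continuity of $d''$. Pointwise on $U_\Delta$ one has $|(d''\omega)_\Delta|\le\sum_{i=0}^{m+1}|\omega_{\partial_i\Delta}|$, so by the monotonicity and convexity of $\phi$ (with $\phi(0)=0$) together with $U_\Delta\subseteq U_{\partial_i\Delta}$,
\begin{align*}
\varrho_\phi(d''\omega)
&=\sum_{\Delta\in X_{m+1}}\int_{U_\Delta}\phi\!\left(\left|\sum_{i=0}^{m+1}(-1)^i\omega_{\partial_i\Delta}\right|\right)dV
\le\frac{1}{m+2}\sum_{\Delta\in X_{m+1}}\sum_{i=0}^{m+1}\int_{U_{\partial_i\Delta}}\phi\big((m+2)\,|\omega_{\partial_i\Delta}|\big)\,dV\\
&\le\frac{N}{m+2}\sum_{\tau\in X_m}\int_{U_\tau}\phi\big((m+2)\,|\omega_\tau|\big)\,dV=\frac{N}{m+2}\,\varrho_\phi\big((m+2)\omega\big),
\end{align*}
the last step because any point of the relative interior of a fixed $\tau\in X_m$ lies in every $(m+1)$-simplex having $\tau$ as a face and hence, by condition (a), in at most $N$ simplices, so there are at most $N$ pairs $(\Delta,i)$ with $\partial_i\Delta=\tau$. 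A routine scaling argument (using $\phi(\lambda t)\le\lambda\phi(t)$ for $0\le\lambda\le1$, as in Remark~\ref{ObsEqivalencia}) then gives $\|d''\omega\|_{C_\phi}\le\max\{N,m+2\}\,\|\omega\|_{C_\phi}$, with a constant independent of $k$.

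It remains to record the algebraic identities. That $d''\circ d''=0$ is the classical cosimplicial identity: expanding $(d''d''\omega)_\Delta$ for $\Delta\in X_{m+2}$ as a double sum and using $\partial_i\partial_j=\partial_{j-1}\partial_i$ for $i<j$, the terms cancel in pairs. For the anticommutation, the sign $(-1)^m$ in $d'\omega=(-1)^m\prod_\Delta d\omega_\Delta$ is exactly what is needed: since $d$ is linear and commutes with restriction, for $\Delta\in X_{m+1}$,
\begin{align*}
(d''d'\omega)_\Delta&=\sum_{i=0}^{m+1}(-1)^i(d'\omega)_{\partial_i\Delta}|_{U_\Delta}=(-1)^m\sum_{i=0}^{m+1}(-1)^i\,(d\omega_{\partial_i\Delta})|_{U_\Delta},\\
(d'd''\omega)_\Delta&=(-1)^{m+1}\,d\big((d''\omega)_\Delta\big)=(-1)^{m+1}\sum_{i=0}^{m+1}(-1)^i\,(d\omega_{\partial_i\Delta})|_{U_\Delta},
\end{align*}
which sum to zero. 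Finally, for $\omega\in C_\phi^{k,m}$ the estimate gives $\|d''\omega\|_{C_\phi}<\infty$, and the anticommutation together with $d'\omega\in C_\phi^{k+1,m}$ gives $\|d'(d''\omega)\|_{C_\phi}=\|d''(d'\omega)\|_{C_\phi}<\infty$, so $d''\omega\in C_\phi^{k,m+1}$ and $|d''\omega|_{C_\phi}\le\max\{N,m+2\}\,|\omega|_{C_\phi}$; hence $d''\colon C_\phi^{k,m}\to C_\phi^{k,m+1}$ is continuous and $(C_\phi^{*,*},d',d'')$ is a bicomplex. The only genuinely delicate step is the continuity estimate for $d''$, where bounded geometry enters through the uniform bound $N$ on the number of cofaces and convexity of $\phi$ is used to bypass any $\Delta_2$-type hypothesis.
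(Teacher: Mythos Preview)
Your proof is correct and follows essentially the same approach as the paper's: the key step in both is the continuity estimate for $d''$, obtained by applying convexity of $\phi$ to the $(m+2)$-term sum and then using the uniform bound $N$ on the number of $(m+1)$-simplices having a given $m$-simplex as a face, exactly as you do. The paper simply asserts that $d''\circ d''=0$ and $d'\circ d''+d''\circ d'=0$ can be verified directly, whereas you spell these out, and both arguments conclude by using the anticommutation to control $\|d'd''\omega\|_{C_\phi}$.
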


\begin{proof}
One can directly verify that $d''\circ d''=0$ and $d'\circ d''+d''\circ d'=0$ so we need to prove that $d''$ is well-defined and continuous from $C_\phi^{k,m}$ to $C_\phi^{k,m+1}$. 

Let $\omega\in C_\phi^{k,m}$. It is clear that $d''\omega$ is alternating. Moreover, for every $\alpha>0$ we have,
\begin{align*}
\varrho_\phi\left(\frac{d''\omega}{\alpha}\right) &=\sum_{\Delta\in X_{m+1}}\int_{U_\Delta}\phi\left(\frac{1}{\alpha}\left|\sum_{i=0}^{m+1}(-1)^i\omega_{\partial_i\Delta}\right|\right) dV\\
& \leq \frac{1}{m+2} \sum_{\Delta\in X_{m+1}}\sum_{i=0}^{m+1} \int_{U_\Delta} \phi\left(\frac{m+2}{\alpha}|\omega_{\partial_i\Delta}|\right)dV\\
& \leq \frac{N}{m+2} \sum_{\Delta'\in X_m}\int_{U_{\Delta'}} \phi\left(\frac{m+2}{\alpha}|\omega_{\Delta'}|\right)dV\\
&=\varrho_{\frac{N}{m+2}\phi}\left(\frac{m+2}{\alpha}\omega\right).
\end{align*}
Where in the second line we use Jensen's inequality and in the third we use the fact that every $m$-simplex of $X$ can be in the boundary of at most $N$ $(m+1)$-simplices. This estimate and an argument as in Remark \ref{ObsEqivalencia} allow to conclude that $\|d''\omega\|_{C_\phi}\preceq \|\omega\|_{C_\phi}$. Using the identity $d'\circ d''+d''\circ d'=0$ we also have $\|d'd''\omega\|_{C_\phi}=\|d''d'\omega\|_{C_\phi}\preceq \|d''\omega\|_{C_\phi}$, thus $d''\omega\in C_\phi^{k,m+1}$ and $d''$ is continuous for the norm $|\ \ |_{C_\phi}$.
\end{proof}

We write $E^k_\phi=\Ker d''|_{C_\phi^{k,0}}$ and $F^m_\phi=\Ker d'|_{C_\phi^{0,m}}$. These spaces conform the horizontal and vertical complexes of kernels of the bicomplex $(C_\phi^{*,*},d',d'')$.

\begin{lemma}\label{retraccion1}
For every $m$ the complex $(C_\phi^{*,m},d')$ retracts to $(F_\phi^m\to 0\to 0\to \cdots)$. 
\end{lemma}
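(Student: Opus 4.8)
The plan is to prove this row-wise retraction by working one simplex at a time and then assembling. Fix $m$ and consider a simplex $\Delta\in X_m$. Since $U_\Delta$ is $L$-biLipschitz homeomorphic to the open ball $B\subset\R^n$, and a biLipschitz homeomorphism induces a topological isomorphism between the complexes $(L^\phi C^*(U_\Delta),d)$ and $(L^\phi C^*(B),d)$ with operator norms controlled by a power of $L$ (hence uniformly in $\Delta$), Lemma \ref{Poincare} gives a contracting homotopy $h_\Delta:L^\phi C^k(U_\Delta)\to L^\phi C^{k-1}(U_\Delta)$ satisfying $dh_\Delta+h_\Delta d=\mathrm{Id}$ in degrees $k\geq 1$ and $h_\Delta(f)+h_\Delta(df)=f$ in degree $0$, where the degree-$0$ part lands in the constants $\R\cong\Ker d|_{L^\phi C^0(U_\Delta)}$. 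Crucially, the bound $\|h_\Delta(\omega)\|_{L^\phi}\preceq\|\omega\|_{L^\phi}$ from the proof of Lemma \ref{Poincare}, transported through the biLipschitz change of variables, holds with a constant independent of $\Delta$ (this is exactly what conditions (a), (b) buy us).

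Next I would define the candidate homotopy on the row by acting coordinatewise: for $\omega=\prod_{\Delta\in X_m}\omega_\Delta\in C_\phi^{k,m}$ set $(P^k\omega)_\Delta=(-1)^m h_\Delta(\omega_\Delta)$ (the sign chosen to be compatible with the sign in $d'\omega=(-1)^m\prod d\omega_\Delta$), and check that $P^k$ is alternating because each $h_\Delta$ commutes with the relabeling of vertices of $\Delta$ (the construction in Lemma \ref{Poincare} is natural). The identities $d'P^k+P^{k+1}d'=\mathrm{Id}$ for $k\geq 1$ and the degree-$0$ version then follow coordinatewise from \eqref{homot} and \eqref{homot2}. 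The key quantitative point is that $P^k$ is continuous from $C_\phi^{k,m}$ to $C_\phi^{k-1,m}$: using the uniform bound $\|h_\Delta(\omega_\Delta)\|_{L^\phi}\leq c\,\|\omega_\Delta\|_{L^\phi(U_\Delta)}$ with $c$ not depending on $\Delta$, one estimates the modular $\varrho_\phi$ of $P^k\omega$ summand by summand, much as in the proof that $d''$ is continuous, obtaining $\varrho_\phi(P^k\omega/\alpha)\leq\varrho_{\lambda\phi}(c'\omega/\alpha)$ for suitable $\lambda$, and then invokes Remark \ref{ObsEqivalencia} to conclude $\|P^k\omega\|_{C_\phi}\preceq\|\omega\|_{C_\phi}$; the estimate for $\|d'P^k\omega\|_{C_\phi}$ follows from $d'P^k\omega=\omega-P^{k+1}d'\omega$ as in Lemma \ref{Poincare}.

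Finally, I would spell out what ``retracts to $(F_\phi^m\to 0\to 0\to\cdots)$'' requires: the inclusion $F_\phi^m\hookrightarrow C_\phi^{0,m}$ together with the retraction $r:C_\phi^{0,m}\to F_\phi^m$ given coordinatewise by $r(\omega)_\Delta=(-1)^m h_\Delta(\omega_\Delta)\in\R$ (equivalently the average of $\omega_\Delta$ over $\tfrac12 B$ pulled back), and the homotopy $P^*$ realize a cochain homotopy equivalence; one checks $r$ is continuous and that $P^*$ witnesses $\iota\circ r\simeq\mathrm{Id}$, the composite in the other direction being the identity since $P$ vanishes on constants. The main obstacle I anticipate is purely bookkeeping: verifying that the constant in $\|h_\Delta\|\leq c$ is genuinely uniform after transporting Lemma \ref{Poincare} through an $L$-biLipschitz map — one must track how the volume distortion factor ($\le L^n$) and the pointwise distortion of forms ($\le L^k$ on $k$-forms) enter the modular inequalities, and confirm that the resulting change $\phi\mapsto\lambda\phi$ has $\lambda$ bounded independently of $\Delta$, so that Remark \ref{ObsEqivalencia} applies with a uniform $K$. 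The signs needed to make $P^*$ genuinely anticommute/commute with $d'$ in the bicomplex convention are a secondary nuisance but routine.
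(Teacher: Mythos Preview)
Your proposal is correct and follows essentially the same route as the paper: transport the Poincar\'e homotopy $h$ of Lemma~\ref{Poincare} through the uniform biLipschitz maps $f_\Delta:B\to U_\Delta$, assemble coordinatewise with the sign $(-1)^m$, and verify continuity via a modular estimate together with Remark~\ref{ObsEqivalencia}. The paper's $H^k$ is exactly your $P^k$, written as $(H^k\omega)_\Delta=(-1)^m(f_\Delta^{-1})^*\circ h\circ f_\Delta^*\,\omega_\Delta$.

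Two small points worth tightening. First, the alternating property of $P^k\omega$ does not come from any ``naturality'' of $h$; it comes from choosing $f_\Delta$ to depend only on the underlying set $\{v_0,\dots,v_m\}$ (which is possible since $U_\Delta$ does), so that $h_\Delta=h_{\Delta'}$ when $\Delta'$ is a reordering of $\Delta$, and then linearity of $h_\Delta$ transfers the sign. Second, the summand-by-summand estimate cannot be run from the \emph{norm} bound $\|h_\Delta(\omega_\Delta)\|_{L^\phi}\le c\|\omega_\Delta\|_{L^\phi}$ alone, since Luxemburg norms are not additive across the sum over $\Delta$; what you actually need (and what the paper uses, see \eqref{Estimacionh}) is the pointwise \emph{modular} inequality $\int_B\phi(|h(\omega)|/\alpha)\le C\int_B\phi(|\omega|/(c\alpha))$, which, after the biLipschitz change of variables, sums over $\Delta$ to give $\varrho_\phi(P^k\omega/\alpha)\le\varrho_{\lambda\phi}(c'\omega/\alpha)$ as you wrote. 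Your outline already lands on this inequality, so this is only a matter of phrasing.
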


\begin{proof}
We have to define a family of continuous linear maps $H^k:C_\phi^{k,m}\to C_\phi^{k-1,m}$ such that
\begin{equation}\label{H}
\left\{\begin{array}{cc}
    H^{k+1}\circ d'+d'\circ H^k= Id &\text{ if }k\geq 1 \\
    H^1\circ d' + inc\circ H^0 =Id, & 
\end{array}\right.    
\end{equation}
where $C_\phi^{-1,m}=F_\phi^m$.

Remember that for every $\Delta\in X_m$ there exists a $L$-biLipschitz homeomorphism $f_\Delta:B\to U_\Delta$. By Rademacher's theorem it is differentiable almost everywhere, so we can consider the pull-back by $f$ of a $k$-form $\omega$ defined almost everywhere and denoted, as usual, by $f^*_\Delta\omega$.
Since $f_\Delta$ is $L$-biLipschitz one can easily prove that
$$|f_\Delta^*\omega|_x\leq L^k |\omega|_{f_\Delta(x)}\text{ and }|J_x f_\Delta|\leq L^n,$$
where $J_xf_\Delta$ denotes the Jacobian at $x$ of $f_\Delta$. This implies that $f^*_\Delta$ is well-defined and continuous from $L^\phi C^k(U)$ to $L^\phi C^k(B)$. The same can be done for the inverse $f^{-1}_\Delta$.

By Lemma \ref{Poincare} there exists a family of maps $h=h_k:L^\phi C^k(B)\to L^\phi C^{k-1}(B)$ such that
\begin{equation}\label{hh}
\left\{\begin{array}{cc}
    h_{k+1}\circ d+d\circ h_k= Id &\text{ if }k\geq 1 \\
    h_1\circ d + inc\circ h_0 =Id, & 
\end{array}\right.  
\end{equation}
(where $L^\phi C^{-1}(B)=\R$), and
\begin{equation}\label{Estimacionh}
\int_B \phi\left(\frac{|h(\omega)|_y}{\alpha}\right)dy \leq C \int_{B}\phi\left(\frac{|\omega|_y}{Vol(B(0,3))\alpha}\right)dz
\end{equation}
with $C$ a positive constant.

Now we define $H^k:C_\phi^{k,m}\to C_\phi^{k-1,m}$ by
$$(H^k \omega)_\Delta =(-1)^m (f_\Delta^{-1})^*\circ h \circ f_\Delta^* \omega_\Delta.$$
Observe that \eqref{H} follows directly from \eqref{hh} and the fact that the pull-back commutes with $d$. Moreover
\begin{align*}
\varrho_\phi\left(\frac{H^k\omega}{\alpha}\right) & = \sum_{\Delta\in X_m}\int_{U_\Delta}\phi\left(\frac{1}{\alpha}|(f_\Delta^{-1})^*\circ h\circ f_\Delta^* \omega_\Delta|_x\right)dV(x) \\
&\leq \sum_{\Delta\in X_m}\int_{U_\Delta} \phi\left(\frac{L^k}{\alpha}|h\circ f_I^* \omega_\Delta|_{f_\Delta^{-1}(x)}\right)dV(x)\\
&\leq L^n\sum_{\Delta\in X_m}\int_B \phi\left(\frac{L^k}{\alpha}|h\circ f_\Delta^* \omega_\Delta|_y\right)dy\\
&\leq L^n C \sum_{\Delta\in X_m}\int_{B} \phi\left(\frac{L^k|f_\Delta^*\omega_I|_y}{Vol(B(0,3))\alpha}\right)dy,
\end{align*}
Using that  $|f_\Delta^*\omega_I|_y\leq L^k |\omega|_{f_\Delta(y)}$ we obtain
\begin{align*}
\varrho_\phi\left(\frac{H^k\omega}{\alpha}\right) & \leq L^{2n} C \sum_{\Delta\in X_m}\int_{U_\Delta} \phi\left(\frac{L^{2k}|\omega|_{x}}{Vol(B(0,3))\alpha}\right)dV(x)\leq \varrho_{L^{2n}C\phi}\left(\frac{L^{2k}\omega}{Vol(B(0,3))\alpha}\right).   
\end{align*}
From here we deduce that $\|H^k\omega\|_{C_\phi}\preceq \|\omega\|_{C_\phi}$. Applying this and  \eqref{H}, we obtain $\|d'H^k\omega\|_{C_\phi}\preceq \|\omega\|_{C_\phi}+\|d'\omega\|_{C_\phi}$, which shows that $H^k$ is continuous for the norm $|\ \ |_{C_\phi}$.
\end{proof}

\begin{lemma}\label{retraccion2}
For every $k$ the complex $(C_\phi^{k,*},d'')$ retracts to $(E_\phi^k\to 0\to 0\to \cdots)$. 
\end{lemma}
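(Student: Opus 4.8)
The plan is to imitate the classical proof that the \v{C}ech complex of a cover is acyclic via a partition of unity, and then to check that the resulting homotopy operator is continuous for $\|\cdot\|_{C_\phi}$, exactly in the spirit of the proof that $d''$ is continuous and of Lemma \ref{retraccion1}. First I would fix a partition of unity $\{\psi_v\}_{v\in X_0}$ subordinate to the covering $\U=\{U_v:v\in X_0\}$: concretely, the barycentric coordinate functions of $X$ read through the biLipschitz charts, so that $\supp\psi_v$ is the closed star of $v$, $\{\psi_v>0\}=U_v$, and $\sum_{v}\psi_v\equiv 1$. By hypothesis $(b)$ each $\psi_v$ is Lipschitz with a constant $C$ independent of $v$ (hence $|d\psi_v|\le C$ almost everywhere), and by $(a)$ at most $N$ of them are nonzero at any point. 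Two small points are worth recording: since $L^\phi C^k(U)$ is stable under multiplication by bounded Lipschitz functions (Leibniz rule together with $L^\infty\cdot L^\phi\subset L^\phi$), the products $\psi_v\,\omega_\Delta$ lie in $L^\phi C^k$; and because $\psi_v$ vanishes on $U_{\Delta'}\setminus U_{(v,\Delta')}$, the form $\psi_v\,\omega_{(v,\Delta')}$ extends by zero to an element of $L^\phi C^k(U_{\Delta'})$, with exterior derivative $d\psi_v\wedge\omega_{(v,\Delta')}+\psi_v\,d\omega_{(v,\Delta')}$ extended by zero.

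Next I would set, for $m\ge 1$,
\[
(K^m\omega)_{(v_0,\ldots,v_{m-1})}=\sum_{v\in X_0}\psi_v\,\omega_{(v,v_0,\ldots,v_{m-1})},
\]
a pointwise-finite sum, defining $K^m:C_\phi^{k,m}\to C_\phi^{k,m-1}$; and $r:C_\phi^{k,0}\to E_\phi^k$ by $r\omega=\sum_{v}\psi_v\,\omega_v$, viewed as the global $k$-form it determines (the associated constant family lies in $\Ker d''$, so $r$ indeed takes values in $E_\phi^k$). Taking $g^0$ to be the inclusion $E_\phi^k\hookrightarrow C_\phi^{k,0}$, and $f^0=r$, $f^m=0$ for $m\ge 1$, a routine \v{C}ech-type computation based on $\sum_v\psi_v=1$ yields the homotopy identities $K^{m+1}\circ d''+d''\circ K^m=\mathrm{Id}$ for $m\ge 1$, $K^1\circ d''+g^0\circ r=\mathrm{Id}$ on $C_\phi^{k,0}$, together with $r\circ g^0=\mathrm{Id}_{E_\phi^k}$. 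Hence, modulo continuity, $(C_\phi^{k,*},d'')$ retracts to $(E_\phi^k\to 0\to 0\to\cdots)$.

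Continuity of $K^m$ (and of $r$, which is the case $m=0$) is then checked exactly as in the proof that $d''$ is continuous and in Lemma \ref{retraccion1}. From $0\le\psi_v\le 1$ and the overlap bound $N$ one has $|(K^m\omega)_{\Delta'}|\le\sum_v|\omega_{(v,\Delta')}|$ with at most $N$ summands; moving $\phi$ through this average by Jensen's inequality, summing over $\Delta'\in X_{m-1}$, and using that each $m$-simplex $\Delta$ equals $(v,\Delta')$ for precisely its $m+1$ vertices $v$, one obtains an estimate $\varrho_\phi(K^m\omega/\alpha)\le\varrho_{c\phi}(c'\omega/\alpha)$ with $c,c'$ depending only on $N$ and $m$, so that $\|K^m\omega\|_{C_\phi}\preceq\|\omega\|_{C_\phi}$ by Remark \ref{ObsEqivalencia}. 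For the derivative, the Leibniz rule and $|d\psi_v|\le C$ give $|(d'K^m\omega)_{\Delta'}|\le\sum_v\bigl(C\,|\omega_{(v,\Delta')}|+|(d'\omega)_{(v,\Delta')}|\bigr)$, and the same counting-plus-Jensen argument yields $\|d'K^m\omega\|_{C_\phi}\preceq\|\omega\|_{C_\phi}+\|d'\omega\|_{C_\phi}=|\omega|_{C_\phi}$; thus $K^m$, and likewise $r$, are continuous for $|\cdot|_{C_\phi}$, which completes the proof.

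The main obstacle, as in related arguments, is not the homotopy algebra but producing the partition of unity with a \emph{uniform} gradient bound: this is precisely where hypothesis $(b)$ of Theorem \ref{main} enters (bounded geometry alone would not be enough), and it is what powers the estimate for $d'K^m$. The subsidiary point that $\psi_v\,\omega_{(v,\Delta')}$ stays in $L^\phi C^k$ after extension by zero also deserves a sentence, but it is routine since $\psi_v$ vanishes on the part of $\partial U_{(v,\Delta')}$ lying in the interior of $U_{\Delta'}$.
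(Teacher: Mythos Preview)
Your proof is correct and follows essentially the same route as the paper's: both build the standard \v{C}ech contraction $P^m$ (your $K^m$) via a partition of unity subordinate to $\U$, verify the homotopy identities from $\sum_v\psi_v=1$, and bound $\|P^m\omega\|_{C_\phi}$ and $\|d'P^m\omega\|_{C_\phi}$ using Jensen's inequality, the overlap bound $N$, and the uniform gradient bound on the $\psi_v$. The only cosmetic differences are that the paper appends the extra vertex at the end (writing $\omega_{\Delta v}$ with a compensating sign $(-1)^m$) rather than prepending it, and it invokes a smooth partition of unity rather than the barycentric one, but neither changes the argument.
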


\begin{proof}
We have to construct a family of linear and continuous maps  $P^m:C_\phi^{k,m}\to C_\phi^{k,m-1}$ such that
\begin{equation}\label{IdentidadP}
\left\{\begin{array}{cc}
    P^{m+1}\circ d''+d''\circ P^m= Id &\text{ if }m\geq 1 \\
    P^1\circ d'' + inc\circ P^0 =Id. & 
\end{array}\right.
\end{equation}
Here we denote $C_\phi^{k,-1}=E_\phi^k$.

We consider a smooth partition of unity $\{\eta_v\}_{v\in X_0}$ for the covering $\U$. It can be taken such that $|d\eta_v|$ is uniformly bounded independently of $v\in X_0$ because of the properties of the covering. Then we define 
$$(P^m \omega)_{\Delta} = (-1)^m\sum_{v\in X_0} \eta_v  \omega_{\Delta v}\ \text{ if }m\geq 1;\text{ and}$$
$$(P^0\omega)_{v_0}=\sum_{v\in X_0} \eta_v \omega_v|_{U_{v_0}}.$$
Where $\Delta v=(v_0,\ldots,v_m,v)$ if $\Delta=(v_0,\ldots,v_m)$, and $\omega_{\Delta v}=0$ if $\Delta v\notin X_{m+1}$. Remember that in the previous expressions the sums do not have more than $N$ terms. It is clear that if $\omega\in C_\phi^{k,0}$, then $d''P^0\omega=0$.

If $\omega\in C_\phi^{k,m}$ for $m\geq 1$, we have
\begin{align*}
(P^{m+1}\circ d'' \omega)_\Delta &= (-1)^{m+1}\sum_{v\in X_0} \eta_v (d''\omega)_{\Delta v}=(-1)^{m+1}\sum_{v\in X_0} \eta_v \left( \sum_{i=0}^{m+1} (-1)^i \omega_{\partial_i(\Delta v)}\right)\\
&= (-1)^{m+1}\sum_{v\in X_0}  \sum_{i=0}^m (-1)^i \eta_v \omega_{(\partial_i\Delta)v} + \sum_{v\notin \Delta}\eta_v \omega_\Delta. 
\end{align*}
On the other hand
\begin{align*}
(d''\circ P^m \omega)_\Delta &= \sum_{i=0}^{m} (-1)^i (P^m \omega)_{\partial_i\Delta}= (-1)^m \sum_{i=0}^m \sum_{v\in X_0} (-1)^i \eta_v \omega_{(\partial_i\Delta)v}\\
&= (-1)^m \sum_{i=0}^m \sum_{v\in X_0} (-1)^i \eta_v \omega_{(\partial_i\Delta)} +  \sum_{v\in \Delta}  \eta_v \omega_{\Delta}
\end{align*}
This shows that for every $\Delta\in X_m$,
$$(P^m\circ d'' \omega)_\Delta+(d''\circ P^m \omega)_\Delta=\omega_\Delta.$$

Now suppose that $m=0$. If $x\in U_{v_0}$, we have
\begin{align*}
(P^1\circ d''\omega)_{v_0} &= -\sum_{v\in X_0} \eta_v (d''\omega)_{(v_0,v)}=-\sum_{v\in X_0} \eta_v \left(\omega_v-\omega_{v_0(x)}\right)\\ 
& = \sum_{v\in X_0}\eta_v\omega_{v_0}-\sum_{v\in X_0}\eta_v\omega_v =\omega_{v_0}-(inc\circ P^0 \omega)_{v_0}.
\end{align*}

To finish the proof let us show that $P^m$ is continuous (and that its image is where it must be). Take $\omega\in C_\phi^{k,m}$ for any $m\geq 0$, then, using Jensen's inequality and the fact that $\eta_v$ is supported in $U_v$ and is bounded by $1$, we obtain
$$\varrho_\phi\left(\frac{P^m \omega}{\alpha}\right)\leq \varrho_{(m+1)\phi}\left(\frac{N\omega}{\alpha}\right),$$
which allows to conclude that $\|P^m\omega\|_{C_\phi}\preceq \|\omega\|_{C_\phi}$. Moreover, if $m\geq 1$, we have 
$$|(d'\circ P^m\omega)_\Delta| \leq \sum_{v\in X_0} |d\eta_v| |\omega_{\Delta v}| + |P^m(d'\omega)_\Delta|.$$
We can apply the previous estimation in the second term and the fact that $|d\eta_v|$ is uniformly bounded independently from $v$ to get a  $\|d'\circ P^m\omega\|_{C_\phi}\preceq \|\omega\|_{C_\phi}+\|d'\omega\|_{C_\phi}$. An analogous estimate can be proved for $m=0$.  As a conclusion we have that $P^m\omega\in C_\phi^{k,m}$ and $P^m$ is continuous for the norm $|\ \ |_{C_\phi}$. 

\end{proof}

\begin{lemma}\label{horizontal}
There exists a family of isomorphisms $\mathcal{E}:E_\phi^*\to L^\phi C^*(M)$ such that $\mathcal{E}\circ d'=d\circ\mathcal{E}$.
\end{lemma}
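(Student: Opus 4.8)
The plan is to identify $E_\phi^k = \Ker d''|_{C_\phi^{k,0}}$ concretely. An element of $C_\phi^{k,0}$ is a family $\omega = (\omega_v)_{v \in X_0}$ with $\omega_v \in L^\phi C^k(U_v)$, and the condition $d''\omega = 0$ says that for every edge $(v_0, v_1) \in X_1$ one has $\omega_{v_1}|_{U_{(v_0,v_1)}} = \omega_{v_0}|_{U_{(v_0,v_1)}}$; that is, the local forms agree on overlaps. Since the open stars $\U = \{U_v\}$ cover $M$ and $L^\phi C^k$ is a sheaf-like object (forms with weak derivatives glue along open covers, because both the form and its weak derivative are determined locally and the $L^\phi$ condition is checked by a locally-finite sum), such a compatible family glues to a single global form $\Ee(\omega) \in L^\phi C^k(M)$. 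Conversely restriction $\omega \mapsto (\omega|_{U_v})_v$ sends $L^\phi C^k(M)$ into $E_\phi^k$. The intertwining $\Ee \circ d' = d \circ \Ee$ is immediate from $d'$ being (up to sign, which is trivial in the row $m=0$ where $(-1)^m = 1$) the local exterior derivative and $d$ being local.

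First I would make precise the gluing: given a $d''$-closed $\omega \in C_\phi^{k,0}$, define $\Ee(\omega)$ pointwise by $\Ee(\omega)_x = (\omega_v)_x$ for any $v$ with $x \in U_v$ — well-defined by the overlap condition — and check it is a measurable $k$-form with weak derivative equal to the form glued from $(d\omega_v)_v$. For the weak-derivative claim I would test against a compactly supported $(n-k-1)$-form $\beta$, use a partition of unity $\{\eta_v\}$ subordinate to $\U$ (as in Lemma \ref{retraccion2}, where such a partition with uniformly bounded $|d\eta_v|$ is already constructed) to write $\beta = \sum_v \eta_v \beta$ as a finite sum locally, and reduce to the local weak-derivative identity on each $U_v$. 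Then I would bound the norms: $\varrho_\phi$ on $C_\phi^{k,0}$ is $\sum_{v} \int_{U_v} \phi(|\omega_v|)\, dV$, and since every point of $M$ lies in at most $N$ stars $U_v$, one gets $\varrho_\phi^M(\Ee(\omega)) = \int_M \phi(|\Ee(\omega)|)\, dV \le \varrho_\phi(\omega) \le N\,\varrho_\phi^M(\Ee(\omega))$ — the first inequality because each point is covered, the second because of at most $N$-fold overlap — and likewise for $d$. By Remark \ref{ObsEqivalencia} these two-sided bounds on the modulars give a norm equivalence (an isomorphism of Banach spaces), both for $\|\cdot\|$ and hence for $|\cdot|_{C_\phi}$ versus $|\cdot|_{L^\phi}$. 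So $\Ee$ is a continuous linear bijection with continuous inverse $\omega \mapsto (\omega|_{U_v})_v$.

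Finally I would verify the intertwining relation on the chain level: in the row $m = 0$, $d'\omega = (-1)^0 \prod_v d\omega_v = (d\omega_v)_v$, and since the exterior derivative (extended to $L^\phi C^*$) commutes with restriction to opens, $\Ee(d'\omega)$ is the global form that restricts to $d\omega_v$ on each $U_v$, which is exactly $d(\Ee\omega)$; one should note $d'\omega$ is again $d''$-closed because $d'\circ d'' + d''\circ d' = 0$ gives $d''(d'\omega) = -d'(d''\omega) = 0$, so $d'$ does map $E_\phi^*$ to $E_\phi^*$ and the statement makes sense.

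I expect the main obstacle to be the weak-derivative gluing — checking that the form obtained by patching the $\omega_v$ genuinely has the patched $(d\omega_v)_v$ as its \emph{weak} (distributional) derivative on all of $M$, not merely that it is smooth or has a derivative on each chart. This is a sheaf-theoretic statement for the presheaf $U \mapsto L^\phi C^k(U)$ and requires the partition-of-unity argument together with the locality of the weak-derivative condition (Remark \ref{ObsDef}); the accompanying norm estimates are then routine, relying only on the bounded-overlap constant $N$ and Remark \ref{ObsEqivalencia}.
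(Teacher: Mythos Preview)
Your proposal is correct and follows essentially the same approach as the paper: glue the compatible local forms to a global one, invert by restriction, and compare the modulars via the bounded overlap of the stars to get the norm equivalence. You are in fact more careful than the paper---you spell out the partition-of-unity argument for the weak-derivative gluing (which the paper simply asserts) and you use the generic overlap bound $N$, whereas the paper uses the sharper constant $n+1$ (each point lies in the interior of a single simplex, hence in at most $n+1$ vertex stars).
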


\begin{proof}
If $\omega\in E_\phi^k$, then for every $v_0,v_1\in X_0$ we have $\omega_{v_0}|_{U_{v_1}}=\omega_{v_1}|_{U_{v_0}}$ (a.e.). This allows to define a measurable $k$-form $\tilde{\omega}=\mathcal{E} \omega$ such that $\omega_v=\tilde{\omega}|_{U_v}$ for every $v\in X_0$. On the other hand, the inverse of $\Psi$ is clearly defined by
$$(\mathcal{E}^{-1}\tilde{\omega})_v=\tilde{\omega}|_{U_v}.$$

To see that both maps $\mathcal{E}$ and $\mathcal{E}^{-1}$ are well-defined and continuous embeddings observe that 
\begin{align*}
\varrho_\phi\left(\frac{\omega}{\alpha}\right) &= \sum_{v\in X_0}\int_{U_v} \phi \left(\frac{|\omega_v|}{\alpha}\right)dV = \sum_{v\in X_0}\int_{U_v} \phi \left(\frac{|\tilde{\omega}|}{\alpha}\right)dV \leq (n+1) \int_M \phi \left(\frac{|\tilde{\omega}|}{\alpha}\right)dV\\
&\leq (n+1)\sum_{v\in X_0}\int_{U_v} \phi \left(\frac{|\tilde{\omega}|}{\alpha}\right)dV = (n+1)\varrho_\phi\left(\frac{\omega}{\alpha}\right)
\end{align*}
This shows that $\|\omega\|_{C_\phi}\asymp \|\tilde{\omega}\|_{L^\phi}$. It is clear that both $\mathcal{E}$ and $\mathcal{E}^{-1}$ commute with the derivatives, which, together with the previous  estimate imply $\|d'\omega\|_{C_\phi}\asymp \|d\tilde{\omega}\|_{L^\phi}$ and finish the proof. 

\end{proof}

\begin{lemma}\label{vertical}
There exists a family of isomorphisms $\mathcal{F}:F_\phi^*\to \ell^\phi(X_*)$ such that $\mathcal{F}\circ d''=\delta \circ\mathcal{F}$.
\end{lemma}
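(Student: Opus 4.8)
The plan is to construct the isomorphism $\mathcal{F}$ essentially by evaluating a vertical cocycle at the barycenters of simplices, and then to check that this evaluation is a bijective, bicontinuous isomorphism compatible with the differentials. Recall that $F_\phi^m = \Ker d'|_{C_\phi^{0,m}}$, so an element $\omega \in F_\phi^m$ is a collection $\omega = \prod_{\Delta \in X_m} \omega_\Delta$ with each $\omega_\Delta \in L^\phi C^0(U_\Delta)$ (a function) satisfying $d\omega_\Delta = 0$ on $U_\Delta$. Since each $U_\Delta$ is biLipschitz homeomorphic to the connected ball $B$, a closed $0$-form (a function with vanishing weak derivative) on $U_\Delta$ is constant; hence $\omega_\Delta$ is a constant $c_\Delta \in \R$. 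So the first step is: show that the natural map $\mathcal{F}\colon F_\phi^m \to \{\text{functions } X_m \to \R\}$, $\mathcal{F}(\omega)(\Delta) = c_\Delta$ (the constant value of $\omega_\Delta$), is well-defined, linear, and (because of the alternating condition on $\omega$) lands in alternating functions on $X_m$, i.e. in $\ell^\phi(X_*)$-cochains once we check integrability.

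Second, I would check that $\mathcal{F}$ intertwines $d''$ with the simplicial coboundary $\delta$. For $\omega \in F_\phi^m$ and $\Delta = (v_0,\dots,v_{m+1}) \in X_{m+1}$ we have $(d''\omega)_\Delta = \sum_{i=0}^{m+1}(-1)^i \omega_{\partial_i \Delta}$, which is the constant $\sum_i (-1)^i c_{\partial_i\Delta}$ on $U_\Delta$; and this is exactly $(\delta \mathcal{F}(\omega))(\Delta) = \mathcal{F}(\omega)(\partial\Delta)$ by the definition of $\delta$ given in Section~\ref{OC}. So $\mathcal{F}\circ d'' = \delta\circ\mathcal{F}$ is immediate from the formulas.

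Third, I would verify that $\mathcal{F}$ is a bicontinuous isomorphism of Banach spaces, i.e. $\|\omega\|_{C_\phi} \asymp \|\mathcal{F}(\omega)\|_{\ell^\phi}$. This is where the bounded-geometry hypotheses enter. Writing $c_\Delta$ for the constant value, one has
\begin{equation*}
\varrho_\phi(\omega/\alpha) = \sum_{\Delta\in X_m}\int_{U_\Delta}\phi(|c_\Delta|/\alpha)\,dV = \sum_{\Delta\in X_m}\phi(|c_\Delta|/\alpha)\,\Vol(U_\Delta),
\end{equation*}
and since by conditions (a)--(b) (via the constants $L$ and $N$) the volumes $\Vol(U_\Delta)$ are bounded above and below by positive constants depending only on $n$, $L$ and the dimension, this sum is comparable to $\sum_{\Delta\in X_m}\phi(|c_\Delta|/\alpha)$, which is $\rho_\phi$ of the sequence $(c_\Delta)_\Delta$ for the counting measure on $X_m$. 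Hence $\omega \in C_\phi^{0,m}$ iff $(c_\Delta) \in \ell^\phi(X_m)$, and the Luxemburg norms are equivalent up to a multiplicative constant (using the scaling argument of Remark~\ref{ObsEqivalencia} to absorb the volume factors). Note $d'$ restricted to $F_\phi^m$ is zero and $\delta$ acts on the target, so there is no extra norm term to control on this complex beyond what we already did when intertwining with $d''$; continuity of $\mathcal{F}$ as a cochain map is then automatic. Surjectivity is clear: given $(c_\Delta)_\Delta \in \ell^\phi(X_m)$ alternating, set $\omega_\Delta \equiv c_\Delta$ on $U_\Delta$; this is a closed $0$-form and the norm estimate shows $\omega \in C_\phi^{0,m}$, so $\omega \in F_\phi^m$ with $\mathcal{F}(\omega) = (c_\Delta)$. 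Injectivity is clear since $\omega$ is determined by its constant values.

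The only genuinely delicate point is the two-sided volume bound $\Vol(U_\Delta) \asymp 1$ uniformly in $\Delta$: the upper bound follows from the $L$-biLipschitz homeomorphism $f_\Delta\colon B \to U_\Delta$ together with the Jacobian bound $|J f_\Delta| \le L^n$ already recorded in the proof of Lemma~\ref{retraccion1}, and the lower bound follows symmetrically from the biLipschitz bound on $f_\Delta^{-1}$. Everything else is a direct transcription of the definitions, parallel in spirit to Lemma~\ref{horizontal}. I would therefore organize the proof as: (1) closed $0$-forms on $U_\Delta$ are constant, so $\mathcal{F}$ is well-defined; (2) $\mathcal{F}$ intertwines $d''$ and $\delta$; (3) the uniform volume bounds give $\|\omega\|_{C_\phi}\asymp\|\mathcal{F}(\omega)\|_{\ell^\phi}$, so $\mathcal{F}$ and $\mathcal{F}^{-1}$ are continuous.

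\begin{proof}
Let $\omega\in F_\phi^m$, so $\omega=\prod_{\Delta\in X_m}\omega_\Delta$ with $\omega_\Delta\in L^\phi C^0(U_\Delta)$ and $d\omega_\Delta=0$ for every $\Delta\in X_m$. Since each $U_\Delta$ is biLipschitz homeomorphic to the connected open ball $B$, it is connected, and a $0$-form with vanishing weak derivative on a connected open set is (a.e.\ equal to) a constant; denote by $c_\Delta\in\R$ the constant value of $\omega_\Delta$. We define
$$(\mathcal{F}\omega)(\Delta)=c_\Delta.$$
The alternating condition on $\omega$ translates directly into the alternating condition on the function $\Delta\mapsto c_\Delta$, so $\mathcal{F}\omega$ is an alternating function on $X_m$. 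Conversely, given an alternating function $\theta\in\ell^\phi(X_m)$, put $(\mathcal{F}^{-1}\theta)_\Delta\equiv\theta(\Delta)$ on $U_\Delta$; this is a closed $0$-form on $U_\Delta$, and the estimate below shows $\mathcal{F}^{-1}\theta\in F_\phi^m$. Clearly $\mathcal{F}$ and $\mathcal{F}^{-1}$ are mutually inverse linear maps.

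We check that $\mathcal{F}$ commutes with the derivatives. Let $\omega\in F_\phi^m$ and $\Delta\in X_{m+1}$. Since $d'$ vanishes on $C_\phi^{0,\ast}$, the condition $d'\omega=0$ is automatic, and
$$(d''\omega)_\Delta=\sum_{i=0}^{m+1}(-1)^i\omega_{\partial_i\Delta},$$
which is the constant $\sum_{i=0}^{m+1}(-1)^ic_{\partial_i\Delta}$ on $U_\Delta$. Hence $d''\omega\in F_\phi^{m+1}$ and
$$(\mathcal{F}d''\omega)(\Delta)=\sum_{i=0}^{m+1}(-1)^ic_{\partial_i\Delta}=(\mathcal{F}\omega)(\partial\Delta)=(\delta\mathcal{F}\omega)(\Delta),$$
by the definition of $\delta$ in $\S\ref{OC}$. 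Thus $\mathcal{F}\circ d''=\delta\circ\mathcal{F}$.

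It remains to show that $\mathcal{F}$ and $\mathcal{F}^{-1}$ are bounded. For every $\Delta\in X_m$ the map $f_\Delta\colon B\to U_\Delta$ is an $L$-biLipschitz homeomorphism, so $|J_xf_\Delta|\leq L^n$ and $|J_yf_\Delta^{-1}|\leq L^n$ for almost every $x\in B$, $y\in U_\Delta$. Consequently
$$L^{-n}\Vol(B)\leq \Vol(U_\Delta)\leq L^n\Vol(B),$$
so there is a constant $a>0$, depending only on $n$ and $L$, with $a^{-1}\leq\Vol(U_\Delta)\leq a$ for every $\Delta$. Now, for $\omega\in F_\phi^m$ and any $\alpha>0$,
$$\varrho_\phi\!\left(\frac{\omega}{\alpha}\right)=\sum_{\Delta\in X_m}\int_{U_\Delta}\phi\!\left(\frac{|c_\Delta|}{\alpha}\right)dV=\sum_{\Delta\in X_m}\phi\!\left(\frac{|c_\Delta|}{\alpha}\right)\Vol(U_\Delta).$$
Hence
$$a^{-1}\sum_{\Delta\in X_m}\phi\!\left(\frac{|c_\Delta|}{\alpha}\right)\leq\varrho_\phi\!\left(\frac{\omega}{\alpha}\right)\leq a\sum_{\Delta\in X_m}\phi\!\left(\frac{|c_\Delta|}{\alpha}\right)=a\,\rho_\phi\!\left(\frac{\mathcal{F}\omega}{\alpha}\right).$$
Reasoning as in Remark \ref{ObsEqivalencia} to absorb the constant $a$ into $\phi$, we obtain $\|\omega\|_{C_\phi}\asymp\|\mathcal{F}\omega\|_{\ell^\phi}$. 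In particular $\omega\in F_\phi^m$ if and only if $\mathcal{F}\omega\in\ell^\phi(X_m)$, so $\mathcal{F}$ maps $F_\phi^m$ onto $\ell^\phi(X_m)$, and both $\mathcal{F}$ and $\mathcal{F}^{-1}$ are continuous. Together with $\mathcal{F}\circ d''=\delta\circ\mathcal{F}$, this completes the proof.
\end{proof}
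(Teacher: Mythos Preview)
Your proof is correct and follows the same approach as the paper's: recognize that each $\omega_\Delta$ is (essentially) constant, let $\mathcal{F}$ read off that constant, and use uniform two-sided volume bounds $\Vol(U_\Delta)\asymp 1$ to obtain the norm equivalence $\|\omega\|_{C_\phi}\asymp\|\mathcal{F}\omega\|_{\ell^\phi}$. One small slip to fix: the clause ``Since $d'$ vanishes on $C_\phi^{0,\ast}$'' is false---$d'$ on $C_\phi^{0,m}$ is (up to sign) the weak exterior derivative on functions, which is not identically zero; the condition $d'\omega=0$ is not automatic but is precisely the hypothesis $\omega\in F_\phi^m$, and your subsequent computation (that $(d''\omega)_\Delta$ is a sum of constants, hence $d''\omega\in F_\phi^{m+1}$) is correct regardless.
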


\begin{proof}
If $\omega\in F_\phi^m$, then for every $\Delta\in X_m$ the $k$-form $\omega_\Delta$ is an essentially constant function. We define $\mathcal{F} \omega: X_m\to \R$ such that $\mathcal{F} \omega (\Delta)$ is the essential value of $\omega_\Delta$. The inverse of $\mathcal{F}$ is given by $(\mathcal{F}^{-1} \theta)_\Delta\equiv \theta(\Delta)$. It is easy to verify that $\mathcal{F}\circ d''=\delta \circ\mathcal{F}$.

To see that $\mathcal{F}$ and $\mathcal{F}^{-1}$ are well-defined and continuous embeddings suppose that $V\geq 1$ is such that $V^{-1}\leq \Vol(U_\Delta)\leq V$ for every simplex $\Delta$. Then, using Jensen's inequality, we have for every $\alpha$,
\begin{align*}
\sum_{\Delta\in X_m} \phi\left(\frac{|\mathcal{F} \omega(\Delta)|}{\alpha}\right) &= \sum_{\Delta\in X_m} \phi\left(\int_{U_\Delta}\frac{|\omega_\Delta|}{\Vol(U_\Delta)\alpha}\right)dV\\
&\leq V \sum_{\Delta\in X_m}\int_{U_\Delta} \phi\left(\frac{|\omega_\Delta|}{\alpha}\right) dV\\
&\leq V^2 \sum_{\Delta\in X_m} \phi\left(\frac{|\mathcal{F} \omega(\Delta)|}{\alpha}\right).
\end{align*}
This shows that $\|\mathcal{F} \omega\|_{\ell^\phi}\asymp \|\omega\|_{C_\phi}=|\omega|_{C_\phi}$.  
\end{proof}

\begin{proof}[Proof of Theorem \ref{main}]
Combining Theorem \ref{bicomplejo} with Lemmas \ref{retraccion1} and \ref{retraccion2} we have that the complexes $(E_\phi^*,d')$ and $(F_\phi^*,d'')$ are homotopically equivalent. Then, aplying Lemmas \ref{horizontal} and \ref{vertical}, we conclude that $(L^\phi C^*(M),d)$ and $(\ell^\phi (X_*),d)$ are homotopically equivalent, which implies the existence of a family of isomorphisms between the corresponding cohomology spaces.  
\end{proof}

Observe that the previous argument can be done considering $\Omega_\phi^{k,m}$ instead of $C_\phi^{k,m}$, which is the space of alternating functions
\begin{equation}
\omega=\prod_{\Delta\in X_m} \omega_\Delta,\ \omega_\Delta\in L^\phi \Omega^k(U_\Delta)
\end{equation}
with $\|\omega\|<+\infty$. Using the above arguments we can obtain the following result:

\begin{theorem}\label{mainDif}
Take $M$ and $X$ as in Theorem \ref{main} and suppose in addition that
\begin{enumerate}
    \item[(e)] $X$ is a smooth triangulation, and
    \item [(f)] the biLipschitz homeomorphisms given by condition $(b)$ are indeed diffeomorphisms.  
\end{enumerate}
Then for any Young function $\phi$ and $k=0,\ldots,n$, there exists an isomorphism between $L^\phi H_s^k(M)$ and $\ell^\phi H^k(x)$. In particular $L^\phi H_s^k(M)$ is isomorphic to $L^\phi H^k(M)$.
\end{theorem}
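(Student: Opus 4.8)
The plan is to run the bicomplex argument of Theorem \ref{main} essentially verbatim, with $\Omega_\phi^{k,m}$ in place of $C_\phi^{k,m}$, and to check that hypotheses (e) and (f) are exactly what is needed to keep every operator occurring in the construction inside the smooth category while leaving all norm estimates untouched. Concretely, one defines $d'$ and $d''$ on $\Omega_\phi^{*,*}$ by the same formulas and proves, by the same Jensen-type estimate as before, that $(\Omega_\phi^{*,*},d',d'')$ is a bicomplex; here $d''$ is a signed sum of restrictions, so it visibly preserves smoothness and no new input is required.

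For the row retraction I would keep the formula $(H^k\omega)_\Delta=(-1)^m(f_\Delta^{-1})^*\circ h\circ f_\Delta^*\omega_\Delta$ of Lemma \ref{retraccion1}, where $h$ is the operator of Lemma \ref{Poincare}: that lemma also asserts the retraction of $(L^\phi\Omega^*(B),d)$, so $h$ maps smooth forms to smooth forms, and by (f) the maps $f_\Delta$ are diffeomorphisms, so $f_\Delta^*$ and $(f_\Delta^{-1})^*$ do as well; the biLipschitz constant $L$ still bounds $|f_\Delta^*\omega|$ and $|Jf_\Delta|$, so the estimates of Lemma \ref{retraccion1} carry over word for word and give that $(\Omega_\phi^{*,m},d')$ retracts to $(F_\phi^m\to 0\to\cdots)$. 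For the column retraction I would use (e): a smooth triangulation admits a smooth partition of unity $\{\eta_v\}$ subordinate to $\U$ with $|d\eta_v|$ uniformly bounded, exactly as in Lemma \ref{retraccion2}, and the operators $P^m$ there, being built only from multiplication by the $\eta_v$ and from restriction, preserve smoothness; the estimates are unchanged, so $(\Omega_\phi^{k,*},d'')$ retracts to $(E_\phi^k\to 0\to\cdots)$.

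It then remains to identify the two kernel complexes. As in Lemma \ref{horizontal}, the compatibility condition satisfied by an element of $E_\phi^k=\Ker d''|_{\Omega_\phi^{k,0}}$ glues its components into a single smooth $k$-form on $M$ which, together with its exterior derivative, lies in $L^\phi$; this yields an isomorphism $\mathcal{E}:E_\phi^*\to L^\phi\Omega^*(M)$ commuting with the derivatives, with the same norm comparison. As in Lemma \ref{vertical}, an element of $F_\phi^m=\Ker d'|_{\Omega_\phi^{0,m}}$ has components that are (genuinely) constant on each connected $U_\Delta$, giving an isomorphism $\mathcal{F}:F_\phi^*\to\ell^\phi(X_*)$ with $\mathcal{F}\circ d''=\delta\circ\mathcal{F}$. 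Theorem \ref{bicomplejo} now provides a homotopy equivalence between $(L^\phi\Omega^*(M),d)$ and $(\ell^\phi(X_*),\delta)$, hence an isomorphism $L^\phi H_s^k(M)\cong\ell^\phi H^k(X)$ of topological vector spaces; composing with the isomorphism $\ell^\phi H^k(X)\cong L^\phi H^k(M)$ of Theorem \ref{main} gives the last assertion.

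The step requiring the most care — though it involves no new quantitative estimate, every bound being inherited from the proof of Theorem \ref{main} — is the bookkeeping that each operator ($h$, the pull-backs $f_\Delta^*$, multiplication by the $\eta_v$) preserves smoothness. This is precisely the point of hypotheses (e) and (f): without (f) the pull-backs $f_\Delta^*$ would only produce Lipschitz forms, and without (e) no smooth partition of unity subordinate to $\U$ would be available. One should also note that $(\Omega_\phi^{*,*},d',d'')$ is not a bicomplex of Banach spaces, so only the unreduced conclusion is obtained, as reflected in the statement.
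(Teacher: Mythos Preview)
Your proposal is correct and follows exactly the route the paper takes: the paper's own argument for Theorem \ref{mainDif} is just the one-line remark that ``the previous argument can be done considering $\Omega_\phi^{k,m}$ instead of $C_\phi^{k,m}$'', and you have spelled out precisely the bookkeeping this entails, including the roles of (e) and (f) and the reason only the unreduced statement follows.
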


If $M$ is a compact Riemannian manifold, then it satisfies the conditions of Theorem \ref{mainDif}. In this case it is easy to see that $L^\phi\Omega^*(M)=\Omega^k(M)$, thus the $L^\phi$-cohomology of $M$ is coincides with its classic de Rham cohomolgy (in the sense of vector spaces).

\section*{Acknowledgments}

I am deeply grateful to Yaroslav Kopylov for his helpful ideas and discussions.  

This work was supported by the \textit{Mathematical Center in Akademgorodok} under the agreement No. 075-15-2019-1675 with the \textit{Ministry of Science and Higher Education} of the Russian Federation.

\medskip
\medskip

\address

\end{document}